\documentclass{amsart}

\usepackage{amssymb}
\usepackage{eepic}
\usepackage{color}
\usepackage[linktocpage=true]{hyperref}

\allowdisplaybreaks

\numberwithin{equation}{section}

\newtheorem{theorem}{Theorem}[section]
\newtheorem{lemma}[theorem]{Lemma}

\newtheorem{proposition}[theorem]{Proposition}
\newtheorem{corollary}[theorem]{Corollary}
\newtheorem{remark}[theorem]{Remark}

\newcommand{\R}{\mathbb{R}}

\newcommand{\summ}{\sum\nolimits}

\def\1{\mathbf{1}}

\def\M{\mathcal{M}}
\def\A{\mathcal{A}}

\def\supp{\mathrm{supp}}
\def\llangle{\langle\langle}
\def\rrangle{\rangle\rangle}

\def\mean{- \hskip-10.5pt \int}

\addtocounter{tocdepth}{0}

\begin{document}

\null

\vskip-40pt

\title{An operator-valued $T1$ theory for symmetric CZOs}

\author[Hong, Liu, Mei]
{Guixiang Hong, Honghai Liu, Tao Mei}
\address{Guixiang Hong, School of Mathematics and Statistics, Wuhan University, Wuhan 430072, China\\
\emph{E-mail address: guixiang.hong@whu.edu.cn}}

\address{Honghai Liu, School of Mathematics and Information Science,
Henan Polytechnic University, Jiaozuo, Henan, 454003,  China\\
\emph{E-mail address: hhliu@hpu.edu.cn}}

\address{Tao Mei,
Department of Mathematics, Baylor University, USA\\
\emph{E-mail address: tao\_mei@baylor.edu}}

\maketitle

\null

\vskip-45pt

\maketitle

\begin{abstract}
We provide a natural BMO-criterion for the $L_2$-boundedness of Calder\'on-Zygmund operators with operator-valued kernels  satisfying a symmetric property. Our arguments  involve both classical and quantum probability theory.  In the appendix, we give a proof of the $L_2$-boundedness of the commutators $[R_j,b]$ whenever $b$ belongs to the Bourgain's vector-valued BMO space, where $R_j$ is the  $j$-th Riesz transform. A common ingredient    is  the operator-valued Haar multiplier studied by Blasco and Pott.
\end{abstract}

\section{Introduction}

There has been a lot of effort into the  generalization of the classical Calder\'on-Zygmund singular integral theories  to  the  operator-valued (or $d$ by $d$ matrix-valued) setting.
The situation is quite subtle and many  straightforward generalizations are turned out to be wrong.
For example, Pisier and  Harcharras showed (see \cite{Pis98,Har99}) that,  for each $1<p<\infty$,  there exists  a scalar-valued Fourier multiplier $T$ that  is bounded on $L_p({\Bbb R})$ but $T\otimes id_{S_p}$ is not bounded  on $L_p({\Bbb R},S_p)$. Here,  $S_p$ denotes the Schatten-$p$ classes and $L_p({\Bbb R},S_p)$ denotes the space of $S_p$-valued $p$-integrable functions.
Another example is  the dyadic paraproduct $$\pi(b,f)=\sum_{n>0} d_nb E_{n-1}f.$$ Here, $E_n$ denotes the conditional expectation  with respect to the usual dyadic filtration on the real line ${\Bbb R}$ and $d_n$ is the difference  $E_n-E_{n-1}$.
It is well known that $\pi$  maps $L_2({\Bbb R})\times  L_2({\Bbb R})$ to $L_1({\Bbb R})$, and this extends to the vector valued setting that $\pi$ maps $L_2({\Bbb R},\ell_2)\times  L_2({\Bbb R},\ell_2)$ to $L_1({\Bbb R},\ell_1)$. However, $\pi$ fails to map
$L_2({\Bbb R},S_2)\times L_2({\Bbb R},S_2)$ to $L_1({\Bbb R},S_1)$, see \cite{Mei06} and \cite{NPTV02}, \cite{NTV97}.
 This pathological property of $\pi$ prevents a desirable operator-valued $T1$-theory with a natural BMO testing-condition.

The authors notice that this kind of pathological property could be rectified for operators $T$ with a ``symmetric" kernel $K(x,y)$ s.t. $K(x,y)=K(y,x)$, including the Beuling transforms, the Haar multipliers, and the commutator $[R_j, b]$ where $R_j$ is the $j-$Riesz transform.  The main purpose of this article is to formulate  a $T1$ theory with a natural BMO test condition for operator valued Calder\'on-Zygmund operators $T$   satisfying the symmetric property  $(T1)^*=T^*1$.  

In their remarkable work \cite{HW06}, Hyt\"onen and Weiss  already established  an operator-valued $T1$ theory in a quite general setting, i.e. for operator valued singular integral operators on vector valued function space $L_p({\Bbb R}, X)$. Their BMO space seems to be quite complicated and does not contain the space of uniformly bounded $\mathcal B(\ell_2)$-valued functions in the most interesting case $X=\ell_2$. This is necessary because of the bad behavior of operator valued paraproducts mentioned above. The authors hope that this work may complement Hyt\"onen and Weis' work for the case of  symmetric singular integrals. On the other hand, even though strictly speaking the commutator $[R_j, b]$ is not a singular integral operator, we are still able to show its $L_2$-boundedness whenever $b$ satisfies a natural BMO test condition in the same spirit. This result might be essentially known to experts, and we will provide a proof in the Appendix.

\bigskip

A main motivation for the present paper is to investigate noncommutative $T1$ theorem in the semicommutative case, which would provide ideas or insights in searching for $T1$ type theorem in the more general noncommutative setting such as on quantum Euclidean spaces, where a $T1$-theory is in high demand but still missing (see \cite{XXX16},\cite{XXY17},\cite{GJP17},\cite{SZ18},\cite{MSX19}). Let us give an introduction along this research line.
The commutative $T1$ theorem due to David and Journ\'e \cite{DaJo84} is a revolutionary result and finds many applications in classical harmonic analysis \cite{Cal77} \cite{Chr90}. Let $K:\mathbb R^n\times \mathbb R^n\setminus\{(x,x): x\in\mathbb R^n\}\rightarrow \mathbb C$ be a kernel satisfying the standard assumptions:
\begin{align}\label{size1}
|K(x,y)|\lesssim\frac{1}{|x-y|^n},\;\forall x\neq y;
\end{align}
\begin{align}\label{regularity1}
|K(x,y)-K(x',y)|+|K(y,x)-K(y,x')|\lesssim\frac{|x-x'|^\alpha}{|x-y|^{n+\alpha}},
\end{align}
$\forall |x-y|\geq 2|x-x'|$, with some $\alpha\in (0,1]$. Here (and below) $A\lesssim B$ means that there exists an absolute constant $C>0$ such that $A\leq CB$.
A linear operator $T$ initially defined on ``nice" functions is called a Calder\'on-Zygmund operator (CZO) associated with $K$, if $T$ satisfies the kernel representation, for a.e.  $x\notin\supp f$,
\begin{align*}
Tf(x)=\int_{\mathbb R^n}K(x,y)f(y)dy.
\end{align*}

\bigskip

The $T1$ theorem states that $T$ extends to a bounded operator on $L_p(\mathbb R^n)$ for one (or equivalently all) $1<p<\infty$ if and only if
\begin{align}\label{bmo condition1}
T1, T^*1\in \mathrm{BMO}(\mathbb R^n),  \;\mathrm{and}
\end{align}
\begin{align}\label{wbp condition1}
T\; \mathrm{has} \;\mathrm{the} \;\mathrm{Weak}\;\mathrm{ Boundedness}\;\mathrm{ Property}\;\sup_{I\;\mathrm{cube}}\frac{1}{|I|}|\langle 1_I,T1_I\rangle|<\infty.
\end{align}
\bigskip

Along the current research line of noncommutative harmonic analysis, the present paper is devoted to the study of a matrix (operator)-valued $T1$ theorem. More precisely, we are interested in the matrix-valued kernels $K:\mathbb R^n\times \mathbb R^n\setminus\{(x,x): x\in\mathbb R^n\}\rightarrow \mathcal B(\ell_2)$ verifying natural assumptions:
\begin{align}\label{size2}
\|K(x,y)\|_{\mathcal B(\ell_2)}\lesssim\frac{1}{|x-y|^n},\;\forall x\neq y;
\end{align}
\begin{align}\label{regularity2}
\|K(x,y)-K(x',y)\|_{\mathcal B(\ell_2)}+\|K(y,x)-K(y,x')\|_{\mathcal B(\ell_2)}\lesssim\frac{|x-x'|^\alpha}{|x-y|^{n+\alpha}},
\end{align}
$\forall |x-y|\geq 2|x-x'|$. We are   interested in operators  $T$ such that, for all  $ S_{\mathcal B(\ell_2)}$-valued step functions $f$ and a.e. $x\notin\supp f$,
\begin{align}\label{def}
Tf(x)=\int_{\mathbb R^n}K(x,y)f(y)dy=\int_{\mathbb R^n}\sum_{i,j}\big(\sum_kK_{ik}(x,y)f_{kj}(y)\big)\otimes e_{i,j}dy.
\end{align}
Here $S_{\mathcal B(\ell_2)}$ denotes the set of all the elements with finite trace support in $\mathcal B(\ell_2)$. We aim to find a natural BMO condition such as  \eqref{bmo condition1}, \eqref{wbp condition1} such that  $T$ extends to a bounded operator on the noncommutative $L_p$ spaces. Here, the noncommutative $L_p$ spaces are associated to the von Neumann algebra $$\mathcal A=L_\infty(\mathbb R^n)\overline{\otimes} \mathcal B(\ell_2)$$ which consists of all essentially bounded functions $f : \mathbb R^n\rightarrow \mathcal B(\ell_2)$. We refer the reader to \cite{PiXu03} \cite{Xu98} for more information on noncommutative $L_p$ spaces.

The modern development of quantum probability and noncommutative harmonic analysis begun with the seminal paper by Pisier and Xu \cite{PiXu97}, where noncommutative Burkholder-Gundy inequality and Fefferman-Stein duality were established. Later on, many inequalities in classical martingale theory have been transferred into the noncommutative setting \cite{Jun02} \cite{JuXu03} \cite{JuXu08} \cite {Ran02} \cite{Ran07} \cite{HoMe12} \cite{HJP16} \cite{HJP17} \cite{JuPe14} etc. Meanwhile, noncommutative harmonic analysis has gained rapid developments ranging from the noncommutative $H^\infty$-calculus \cite{JLX06, FMS19}, operator-valued harmonic analysis \cite{Mei07} \cite{HoYi13} \cite{HLMP14} \cite{Par09} to Riesz transform/Fourier multipliers on group von Neumann algebras \cite{JMP14} \cite{JMP15} \cite{JuMe10}, hypercontractivity of quantum Markov semigroups \cite{JPPPR12} \cite{JPPP13} \cite{RiXu16} and harmonic analysis on quantum Euclidean spaces/torus \cite{CXY13} \cite{XXY17} \cite{GJP17}.

It worths to point out that the operator-valued (or semi-commutative) harmonic analysis often provides deep insights in harmonic analysis in the general noncommutative setting, and sometimes plays essential role based on the transference principles. For instance, the main ideas of the work \cite{JMP14, CXY13, XXY17, GJP17} are to reduce the problems in their setting to the corresponding problems in the operator-valued setting.

\bigskip

An interesting case is that the functions $f$ are $\ell_2$-valued. This case has been extensively studied in the series of works \cite{TrVo97} \cite{NTV97} \cite{Pet00} \cite{GPTV00} \cite{NPTV02} etc since 97's. In these works, many results in classical harmonic analysis such as weighted norm inequalities, Carleson embedding theorem, Hankel operators, commutators, paraproducts have been extended to the matrix-valued setting. A common character of all these results is that the behavior depends on the dimension of the underlying matrix. For instance, in \cite{NPTV02}, among many other related results, the authors consider the dyadic paraproduct with symbol in noncommutative BMO acting on $\mathbb C^d$-valued functions and show that the bound of the paraproduct operator is of order $O(\log d)$. Since we will not work with noncommutative BMO space $\mathrm{BMO}^{cr}(\mathcal A)$, we refer the reader to \cite{Mei07} for the definition and properties.

More precisely, let $\mathcal D$ be the collection of dyadic intervals in $\mathbb R$. For any dyadic interval $I\in\mathcal D$, let $h_I:={|I|}^{-1/2}(1_{I_+}-1_{I_-})$ be the associated Haar function, where $I_+,I_-$ are left and right halves of the interval $I$. Let $b$ be a $d\times d$-matrix-valued function on $\mathbb R$ and $f$ be a $\mathbb C^d$-valued function on $\mathbb R$, the paraproduct is defined as
\begin{align*}
\pi_b(f):=\sum_{I\in\mathcal D} \mathbb D_I(b)\mathbb E_I(f),
\end{align*}
where $\mathbb D_I(b):=\langle h_I,b\rangle h_I=\int_\mathbb R b(x)h_I(x)\,dx \,h_I$ is a $d\times d$-matrix-valued function on $\mathbb R$ and $\mathbb E_I(f):=\langle \frac{1_I}{|I|},f\rangle 1_I=\mean_If(x)\,dx\, 1_I$ is a $\mathbb C^d$-valued function on $\mathbb R$. In \cite{NPTV02,Mei06}, the authors showed that it may happen
\begin{align}\label{bound of paraproduct1}
\|\pi_b\|_{L_2(\mathbb R;\mathbb C^d)\rightarrow L_2(\mathbb R;\mathbb C^d)}\gtrsim \|b\|_{L_\infty(\mathcal A )}\log d,
\end{align}
This tells us that a naive generalization of classical $T1$ theorem in the semicommutative setting is not true, that is, $T1, T^*1\in \mathrm{BMO}^{cr}(\mathcal A)$ can not guarantee the boundedness of matrix-valued CZOs since the paraproduct is a typical example of perfect dyadic CZOs and $L_\infty(\mathcal A)$ is contained in $\mathrm{BMO}^{cr}(\mathcal A)$ . A CZO on $\mathbb R$ being perfect dyadic means its kernel satisfies the condition (instead of \eqref{regularity2})
\begin{align}\label{perfect}
\|K(x,y)-K(x',y)\|_{\mathcal B(\ell_2)}+\|K(y,x)-K(y,x')\|_{\mathcal B(\ell_2)}=0,
\end{align}
whenever $x,x'\in I$ and $y\in J$ for some disjoint dyadic intervals $I$ and $J$. Perfect dyadic kernels were introduced in \cite{AHMTT02} and include martingale transforns, as well as paraproducts and their adjoints. 

In the remarkable works \cite{Hyt06,HW06}, Hyt\"onen and Weis  have proven  an operator valued $T1$-theorem. However, the BMO-space in their work is a bit artificial and it may not contain $L_\infty$-functions, though this is necessary due to the abnormality of matrix-valued paraproducts.

The first result of the present paper is that under the symmetric assumption $(T1)^*=T^*1$,   the perfect dyadic CZOs $T$ are  bounded on $L_2(\mathcal A)$ provided $T1\in \mathrm{BMO}(\mathcal A,\Sigma_\mathcal A)$, the usual dyadic vector-valued BMO spaces which contains $L_\infty(\mathcal A)$.    Here ``1" means the identity of the algebra $\mathcal A$, and the BMO space $\mathrm{BMO}(\mathcal A,\Sigma_\mathcal A)$ is the dyadic version of the one first studied by Bourgain \cite{Bou86}, whose norm of an operator-valued function $g$ on $\mathbb R$ is defined as
$$\|g\|_{\mathrm{BMO}(\mathcal A,\Sigma_\mathcal A)}=\sup_{I\in\mathcal D}  \Big( \mean_I \|g(x) - g_I\|^2_{\mathcal{B}(\ell_2)} \, dx \Big)^\frac12.$$

On the other hand, providing suitable analogue of \eqref{wbp condition1} for $L_p(\mathcal A)$-boundedness of matrix-valued CZOs when $p\neq2$ is also subtle, since there are some noncommutative martingale transforms with noncommuting coefficients---another type of examples of perfect dyadic CZOs with $T^*1=T1=0$---failing $L_p(\mathcal A)$-boundedness for $p\neq2$, see for instance \cite{Par09}. That implies that a natural Weak Boundedness Property $$\sup_{I\in\mathcal D}\frac{1}{|I|}\|\langle 1_I,T1_I\rangle\|_{\mathcal B(\ell_2)}<\infty$$ can not guarantee the $L_p(\mathcal A)$-boundedness of matrix-valued CZOs for $p\neq2$. In the present paper, we are content with the second best--- showing the boundedness between $L_p(\mathcal A)$ and noncommutative Hardy spaces under the natural Weak Boundedness Property.

\bigskip

Assuming the symmetric condition, we build a weakened form of $T1$ theorem first for the toy model---matrix-valued perfect dyadic CZOs.

\begin{theorem}\label{thm:perfect}
Let $T$ be an operator-valued perfect dyadic CZO satisfying
\begin{align}\label{symmetry1}
\mathrm{Symmetric}\;\mathrm{condition:}\;(T1)^*=T^*1;
\end{align}
\begin{align}\label{bmo condition2}
\mathrm{BMO}\;\mathrm{condition:}\;T1\in \mathrm{BMO}(\mathcal A,\Sigma_\mathcal A);
\end{align}
\begin{align}\label{wbp condition2}
\mathrm{WBP}\;\mathrm{condition:}\;\sup_{I\in\mathcal D}\frac{1}{|I|}\|\langle 1_I,T1_I\rangle\|_{\mathcal B(\ell_2)}<\infty.
\end{align}
Then
  $T$ is bounded on $L_2(\mathcal A)$. Moreover,
\begin{itemize}
\item $T$ is bounded from $L_p(\mathcal A)$ to $\mathrm{H}^c_p(\mathcal A,\Sigma_\mathcal A)$ whenever $2<p<\infty$;
\item $T$ is bounded from $\mathrm{H}^c_p(\mathcal A,\Sigma_\mathcal A)$ to $L_p(\mathcal A)$ whenever $1<p<2$.
\end{itemize}

\end{theorem}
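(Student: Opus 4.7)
The plan is to reduce $T$ to three simpler pieces via the operator-valued Haar basis and to control each piece separately, using the symmetric hypothesis precisely to rescue the paraproduct part from the $\log d$ obstruction recalled around \eqref{bound of paraproduct1}.

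First, I would expand $T$ in the Haar system on $\mathbb R$. The perfect dyadic hypothesis \eqref{perfect} forces the bi-Haar coefficients $\langle h_J,Th_I\rangle$ to vanish whenever $I$ and $J$ are not nested. The coefficients with $J\subsetneq I$ turn out, after a short computation using $\int h_I=0$, to coincide with those produced by the left paraproduct $\pi_{T1}$; the coefficients with $I\subsetneq J$ coincide with those produced by $(\pi_{T^*1})^*$; only the diagonal $I=J$ is genuinely new. This gives the three-piece decomposition
\begin{equation*}
T=\pi_{T1}+(\pi_{T^*1})^*+\Lambda_T,\qquad \Lambda_T f=\sum_{I\in\mathcal D}\lambda_I\,\mathbb D_I(f),\quad \lambda_I:=\langle h_I,Th_I\rangle.
\end{equation*}
Expanding $\lambda_I$ as a signed combination of the four terms $\langle 1_{I_\pm},T 1_{I_\pm}\rangle$ and estimating the cross-terms using the perfect dyadic kernel together with the WBP hypothesis \eqref{wbp condition2} yields $\sup_I\|\lambda_I\|_{\mathcal B(\ell_2)}<\infty$, after which Haar orthogonality gives $\|\Lambda_T f\|_{L_2(\mathcal A)}\le(\sup_I\|\lambda_I\|)\,\|f\|_{L_2(\mathcal A)}$; this is the $p=2$ case of the Blasco-Pott operator-valued Haar multiplier bound.

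The main work is the $L_2$-bound of the paraproduct piece. By the symmetric hypothesis \eqref{symmetry1}, setting $b:=T1\in\mathrm{BMO}(\mathcal A,\Sigma_\mathcal A)$ gives $T^*1=b^*$, so the paraproduct piece equals $\pi_b+(\pi_{b^*})^*$. Neither summand is $L_2(\mathcal A)$-bounded individually---this is exactly the $\log d$ obstruction in \eqref{bound of paraproduct1}---but I claim the sum is. The Haar bi-index matrix of $\pi_b$ has entry $h_I(J)\,b_J$ at position $(J,I)$ with $J\subsetneq I$, whereas the matrix of $(\pi_{b^*})^*$ has entry $h_J(I)\,b_I$ at position $(J,I)$ with $I\subsetneq J$: the two matrices fill out each other's nested off-diagonal, with sides of multiplication aligned precisely because the symmetric hypothesis lets us replace $(T^*1)_I^*$ by $b_I$. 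Pairing the two sums, the non-commutative obstruction for $\pi_b$ alone cancels and the $L_2$-estimate reduces to an operator-valued Haar multiplier bound à la Blasco-Pott with coefficients governed by a Carleson-type embedding for the Bourgain BMO norm of $b$---morally the same mechanism used in the appendix to bound $[R_j,b]$.

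Finally, for the $L_p\to\mathrm H^c_p(\mathcal A,\Sigma_\mathcal A)$ statement with $2<p<\infty$, I would upgrade this $L_2$ analysis to a column square-function estimate $\|(\sum_I|\mathbb D_I(Tf)|^2)^{1/2}\|_{L_p(\mathcal A)}\lesssim\|f\|_{L_p(\mathcal A)}$ piece by piece in the three-piece decomposition, using noncommutative Stein and Burkholder-Gundy inequalities to square-function-control the paraproduct terms and the uniform bound on $\lambda_I$ to handle $\Lambda_T$. The range $1<p<2$ for $\mathrm H^c_p\to L_p$ then follows by $\mathrm H^c_p$--$\mathrm{BMO}^c$ duality, where the symmetric hypothesis is invoked once more to transfer the $p>2$ bound on $T$ to the analogous one on $T^*$. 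The main obstacle throughout is the combined paraproduct estimate of the third paragraph: extracting the cancellation between $\pi_b$ and $(\pi_{b^*})^*$ cleanly enough to fall within the scope of a Blasco-Pott Haar multiplier with BMO-controlled symbol is the truly new step, and the one most sensitive to the symmetric hypothesis.
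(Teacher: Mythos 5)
Your route---expand in the Haar basis, identify the nested off-diagonal coefficients with $\pi_{T1}$ and $(\pi_{T^*1})^*$, use the symmetry hypothesis to merge them into the single operator-valued Haar multiplier $\Lambda_b(f)=\sum_k\mathbb D_k(b)\mathbb E_k(f)$ with $b=T1$, and treat the diagonal as a bounded martingale transform---is precisely the paper's proof via Lemma \ref{lem:rep of perfect} together with Propositions \ref{prop:mart1} and \ref{prop:haar1}. The only caveat is terminological: the Blasco-Pott type estimate (realized in Proposition \ref{prop:haar1} via Bourgain's $\mathrm{BMO}$--$\mathrm H^m_1$ duality and an Abel-summation identity) is what controls the combined paraproduct piece $\Lambda_b$, not your diagonal $\Lambda_T$, which is the simpler martingale-transform bound of Proposition \ref{prop:mart1}.
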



Here $\mathrm{H}^c_p(\mathcal A,\Sigma_\mathcal A)$ is the noncommutative martingale Hardy spaces that we will recall in Section 2. A useful observation in the proof is Lemma \ref{lem:rep of perfect}, which states that dyadic martingale transforms, dyadic paraproducts or their adjoints are essentially the only perfect dyadic CZOs. Then we are reduced to show the boundedness of noncommutative Haar mulitplier---the sum of paraproduct and its adjoint---in Lemma \ref{prop:haar1} where the symmetry is exploited, and the boundedness of noncommutative martingale transform in Lemma \ref{prop:mart1}.

The proof of this toy model is relatively easy but essential for the understanding of our arguments for  (higher-dimensional) general CZOs and commutators.

\bigskip

For {\it continuous CZO}, that is the general singular integrals satisfying \eqref{def} with kernels verifying the standard size and smooth conditions \eqref{size2} \eqref {regularity2}, we  establish a similar result.

\begin{theorem}\label{thm:general}
Let $T$ be a continuous CZO on $\mathbb R^n$ satisfying
\begin{align}\label{symmetry2}
\mathrm{Symmetric}\;\mathrm{condition:}\;(T1)^*=T^*1;
\end{align}
\begin{align}\label{bmo condition3}
\mathrm{BMO}\;\mathrm{condition:}\;T1\in \mathrm{BMO}({\mathbb R^n; \mathcal B(\ell_2)});
\end{align}
\begin{align}\label{wbp condition3}
\mathrm{WBP}\;\mathrm{condition:}\;\sup_{I\;\mathrm{cube}}\frac{1}{|I|}\|\langle 1_I,T1_I\rangle\|_{\mathcal B(\ell_2)}<\infty.
\end{align}
Then $T$ is bounded on $L_2(\mathcal A)$. Moreover,
\begin{itemize}
\item $T$ is bounded from $L_p(\mathcal A)$ to $\mathrm{H}^c_p(\mathbb R^n;\mathcal{B}(\ell_2))$ whenever $2<p<\infty$;
\item $T$ is bounded from $\mathrm{H}^c_p(\mathbb R^n;\mathcal{B}(\ell_2))$ to $L_p(\mathcal A)$ whenever $1<p<2$.
\end{itemize}
\end{theorem}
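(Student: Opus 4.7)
The plan is to reduce Theorem~\ref{thm:general} to the perfect dyadic case Theorem~\ref{thm:perfect} via Hyt\"onen's dyadic representation theorem, adapted to operator-valued kernels. Concretely, the random dyadic grid machinery of Nazarov--Treil--Volberg and Hyt\"onen expresses $T$ as an average
$$\langle Tf, g\rangle \;=\; C_T \, \mathbb{E}_\omega \sum_{k,l \geq 0} 2^{-\alpha \max(k,l)/2} \,\langle S^{k,l}_\omega f, g \rangle,$$
where, relative to a random dyadic lattice $\mathcal D_\omega$, each $S^{k,l}_\omega$ with $(k,l)\neq(0,0)$ is a dyadic shift of complexity $(k,l)$ satisfying $S^{k,l}_\omega 1 = (S^{k,l}_\omega)^* 1 = 0$ and inheriting the WBP from $T$, while the $(k,l)=(0,0)$ term contains in addition the paraproduct contributions $\pi^\omega_{T1}$ and $(\pi^\omega_{T^*1})^*$. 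In the semi-commutative setting this expansion is carried out entrywise against the Haar basis of $\mathcal D_\omega$, and the symmetry/size/regularity hypotheses \eqref{size2}--\eqref{regularity2} give the geometric decay factor $2^{-\alpha\max(k,l)/2}$.

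Once the representation is in place, the $L_2(\mathcal A)$ bound for $T$ follows by applying Theorem~\ref{thm:perfect} to each dyadic shift: each $S^{k,l}_\omega$ is a perfect dyadic CZO with zero action on $1$ and WBP, so its $L_2(\mathcal A)$ norm is of at most polynomial growth in $k+l$ and is absorbed by the exponential factor. The paraproduct piece is precisely where the symmetric condition is indispensable: individually $\pi^\omega_{T1}$ and $\pi^\omega_{T^*1}$ are not $L_2(\mathcal A)$-bounded, as recorded in \eqref{bound of paraproduct1}, but the symmetric condition $(T1)^*=T^*1$ lets us combine them into $\pi^\omega_{T1} + (\pi^\omega_{T^*1})^*$, which reorganizes as an operator-valued Haar multiplier with symbol governed by $T1\in \mathrm{BMO}(\mathbb R^n; \mathcal B(\ell_2))$. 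Its $L_2(\mathcal A)$-boundedness is exactly the Blasco--Pott Haar multiplier bound already invoked in the proof of Theorem~\ref{thm:perfect}.

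For the $L_p\to \mathrm H^c_p$ bound when $p>2$ and the $\mathrm H^c_p\to L_p$ bound when $1<p<2$, I would run the same reduction, applying now the corresponding $L_p$ conclusions of Theorem~\ref{thm:perfect} to each shift, summing the (still) geometric series, averaging over $\omega$, and combining with the analogous $L_p$-$\mathrm H^c_p$ bound for the operator-valued Haar multiplier. The $1<p<2$ case can alternatively be obtained by duality from the $p>2$ case applied to $T^*$, since the symmetric condition, the BMO condition (now on $T^*1=(T1)^*$), and the WBP are all preserved under $T\mapsto T^*$.

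The main obstacle will be setting up the dyadic representation in the semi-commutative setting with sharp enough control on the individual shifts and, crucially, with the two paraproduct pieces extracted together, never separately. If either $\pi^\omega_{T1}$ or $(\pi^\omega_{T^*1})^*$ appeared alone in the expansion, the argument would collapse by \eqref{bound of paraproduct1}; it is the precise bookkeeping in the representation, combined with the symmetric hypothesis, that forces them to appear as a sum and thereby reroutes the estimate through the Haar multiplier. A secondary technical point is to verify that the relevant Hardy and BMO norms are preserved under the random-grid average and that all intermediate bounds are uniform in $\omega$.
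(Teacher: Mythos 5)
Your overall strategy is close in spirit to the paper's, but there are three substantive problems in the way you propose to execute it.

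First, the paper does not invoke the full Hyt\"onen dyadic representation theorem. It uses the lighter approach of \cite{Hyt10}: Figiel's telescopic decomposition $\langle\langle g,Tf\rangle\rangle = A+B+C$, with $B=B^0+P$ and $C=C^0+Q$, and then the random-grid averaging identity is used only to restrict the sums in $A$, $B^0$, $C^0$ to good cubes. Each of these pieces is estimated directly, entry by entry, via the decay estimate $\sup_I\|\langle h^\eta_{I\dot+m},Th^\theta_I\rangle\|_{\mathcal M}\lesssim (1+|m|)^{-n-\alpha}$, never by quoting Theorem~\ref{thm:perfect}. An operator-valued representation theorem and a polynomial-in-complexity bound for operator-valued dyadic shifts would both need separate proofs in the semi-commutative setting, and neither is in the paper.

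Second, and more seriously, your claim that ``each $S^{k,l}_\omega$ is a perfect dyadic CZO'' is false for $(k,l)\neq(0,0)$. By Lemma~\ref{lem:rep of perfect} a perfect dyadic CZO is precisely a martingale transform plus a paraproduct and its adjoint; a shift of nonzero complexity moves Haar coefficients between scales and does not satisfy the perfect-kernel condition~\eqref{perfect}, hence is outside the scope of Theorem~\ref{thm:perfect}. So the quantitative $L_2(\mathcal A)$ (and $L_p$) bounds for the non-paraproduct part of your representation would need a new argument, e.g.\ a bound for general operator-valued Haar shifts with explicit dependence on $(k,l)$, which you do not supply.

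Third, the $p\neq 2$ endpoint spaces in Theorem~\ref{thm:general} are the \emph{continuous} Hardy spaces $\mathrm{H}^c_p(\mathbb R^n;\mathcal B(\ell_2))$, defined via the Poisson semigroup, whereas Theorem~\ref{thm:perfect} produces bounds into and out of the \emph{dyadic} Hardy spaces $\mathrm{H}^c_p(\mathcal A,\Sigma^\omega_{\mathcal A})$, with filtration depending on $\omega$. Averaging over $\omega$ does not automatically land you in the continuous space, and no uniform-in-$\omega$ equivalence between these spaces is available in the operator-valued setting. The paper circumvents this by a completely different argument: it proves $T:\mathrm{H}^c_1(\mathbb R^n;\mathcal M)\to L_1(\mathcal A)$ via the atomic decomposition of $\mathrm{H}^c_1$ together with a direct $L_1(\mathcal M;L_2^c(\mathbb R^n))$ bound for $T$, then gets the stated $L_p$ statements by interpolation and duality, using the already-established $L_2$ bound. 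This is the step your proposal most clearly misses; the ``run the same reduction and sum the series'' plan would not yield the theorem as stated.
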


Here, the BMO and Hardy spaces are the continuous version of the dyadic spaces in the toy model case that we will recall in the body of the proof.
Decompose $T=T_{e}+T_{o}$ as the sum of even and odd parts associated with the kernels $$K_{e}(x,y)=\frac{K(x,y)+K(y,x)}2, \ \ K_{o}(x,y)=\frac{K(x,y)-K(y,x)}2.$$  It is easy to see that $T_e$ satisfies our symmetric assumption $(T_e1)^*=T_e^*1$. We then reduce the $L_2$-boundedness of $T$ to $T_e1\in \mathrm{BMO}(\mathbb R^n;\mathcal B(\ell_2)) $ and the $L_2$-boundedness of $T_{o}$. In particular, together with Remark 1.37 in \cite{HW06}, we get the following corollary.

\begin{corollary}\label{cor:general}
Let $T$ be a continuous CZO on $\mathbb R^n$ satisfying
\par
 \emph{Symmetric condition:}
\begin{align}\label{symmetry3}
 K_o\in L_2(\mathbb R^{2n}; \mathcal B(\ell_2)) \;\mathrm{or}\; T_o1,T_o^*1\in \mathrm{BMO}(\mathbb R^n;S_q(\ell_2)) \;1<q<\infty;
\end{align}
\begin{align}\label{bmo condition4}
\mathrm{BMO}\;\mathrm{condition:}\; T_e1\in \mathrm{BMO}(\mathbb R^n; {\mathcal B(\ell_2)}) ;
\end{align}
\begin{align}\label{wbp condition4}
\mathrm{WBP}\;\mathrm{condition:}\;\sup_{I\;\mathrm{cube}}\frac{1}{|I|}\|\langle 1_I,T1_I\rangle\|_{\mathcal B(\ell_2)}<\infty.
\end{align}
Then $T$ is bounded on $L_2(\mathcal A)$.
\end{corollary}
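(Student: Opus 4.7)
The plan is to decompose $T = T_e + T_o$, where $T_e$ and $T_o$ are the CZOs associated with the even and odd kernels $K_e(x,y) = \tfrac{1}{2}(K(x,y)+K(y,x))$ and $K_o(x,y) = \tfrac{1}{2}(K(x,y)-K(y,x))$. Both $K_e$ and $K_o$ inherit the standard size and regularity bounds \eqref{size2}--\eqref{regularity2} from $K$ by the triangle inequality, since those bounds are symmetric under swapping the two arguments. I will bound $T_e$ via Theorem \ref{thm:general} of this paper and $T_o$ via Remark 1.37 of \cite{HW06}, then add the two results.

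For $T_e$, I verify the three hypotheses of Theorem \ref{thm:general}. The symmetric condition $(T_e 1)^* = T_e^* 1$ follows pointwise from $K_e(x,y) = K_e(y,x)$, since formally $(T_e 1)^*(x) = \int K_e(x,y)^*\,dy = \int K_e(y,x)^*\,dy = T_e^* 1(x)$. The BMO condition on $T_e 1$ is exactly \eqref{bmo condition4}. The WBP condition transfers thanks to the key cancellation
\[
\langle 1_I, T_o 1_I\rangle \;=\; \iint_{I\times I} K_o(x,y)\,dx\,dy \;=\; 0,
\]
which holds because $K_o$ is antisymmetric in $(x,y)$ while the integration domain $I\times I$ is symmetric under swap. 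Thus $\langle 1_I, T_e 1_I\rangle = \langle 1_I, T 1_I\rangle$, and the WBP of $T$ supplied by \eqref{wbp condition4} passes directly to $T_e$. Theorem \ref{thm:general} then yields the $L_2(\mathcal A)$-boundedness of $T_e$.

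For $T_o$, I invoke Remark 1.37 of \cite{HW06} directly. Under either of the two alternatives in \eqref{symmetry3}---the Hilbert--Schmidt type condition $K_o \in L_2(\mathbb R^{2n}; \mathcal B(\ell_2))$, or the Schatten-class BMO condition $T_o 1, T_o^* 1 \in \mathrm{BMO}(\mathbb R^n; S_q(\ell_2))$ for some $1<q<\infty$---the cited remark delivers the $L_2(\mathcal A)$-boundedness of $T_o$. Combining with the bound on $T_e$ gives the corollary.

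The main (and essentially only nontrivial) point in the reduction is the WBP transfer from $T$ to $T_e$ via the antisymmetry cancellation of $K_o$ on $I\times I$; everything else amounts to bookkeeping on the kernel estimates and a direct citation of \cite{HW06}. The potential pitfall is to forget that $T_o$ itself need not satisfy our symmetric condition or the BMO hypothesis of Theorem \ref{thm:general}, which is precisely why the auxiliary hypothesis \eqref{symmetry3} is imposed separately and handled by a different result.
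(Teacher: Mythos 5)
Your proof is correct and follows essentially the same route as the paper: decompose $T = T_e + T_o$ via the even/odd kernel splitting, handle $T_e$ by Theorem \ref{thm:general} (noting the WBP transfers since $\langle 1_I, T_o 1_I\rangle$ vanishes by antisymmetry of $K_o$), and handle $T_o$ via Remark~1.37 of \cite{HW06}. The paper's own argument is stated more tersely in the paragraph preceding the corollary, but it is the identical decomposition and reduction; your explicit WBP cancellation is a worthwhile detail the paper leaves implicit.
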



Theorem \ref{thm:perfect}, \ref{thm:general}  and Corollary \ref{cor:general}  hold for general operator-valued functions, e.g. replacing $\mathcal B(\ell_2)$ by any semifinite von Neumann algebra $\mathcal M$. Our proof will be written in this general framework.

As in classical harmonic analysis  \cite{AHMTT02}, from the result in the dyadic setting--Theorem \ref{thm:perfect}, it is usually not difficult to guess similar result---Theorem \ref{thm:general}---in the continuous setting. In scalar-valued harmonic analysis, we can realize this passage from the dyadic setting to the continuous one by dealing with issues such as rapidly decreasing tails or using the Vitali covering lemma. In the case of vector-valued harmonic analysis, this passage requires deep understanding on the connection between martingale theory and harmonic analysis as done in \cite{Bou86} \cite{Bur86} \cite{Fig90} \cite{HoMa16} \cite{HoMa17} \cite{Xu98} \cite{MTX06} etc. In noncommutative harmonic analysis,  in addition to the idea or the techniques developed in vector-valued theory,  new idea, techniques or tools developed in noncommutative analysis are usually needed to realize this passage such as in \cite{Mei07}, \cite{HoYi13}. In the present paper, the main idea or technique from vector-valued theory we need is the method of random dyadic cubes firstly introduced in \cite{NTV03}, later modified in \cite{Hyt12} \cite{Hyt10}.

\bigskip

We will show Theorem \ref{thm:perfect} and \ref{thm:general} in Section 2 and 3 respectively. The definitions of BMO spaces and Hardy spaces as well as the method of random dyadic cubes will be properly recalled in the body of the paper. In the Appendix, we will show that the commutator $[R_j,b]$ is $L_2$-bounded whenever $b$ belongs to Bourgain's vector-valued BMO space $\mathrm{BMO}(\mathbb R^n;\mathcal M)$. This result might be essentially known to experts, but we do not find it in any literature, and thus we put it in the Appendix.

\section{Perfect dyadic CZOs: proof of Theorem \ref{thm:perfect}}

Let $\M$ be a semifinite von Neumann algebra equipped with a normal semifinite faithful trace $\tau$. Consider the algebra of essentially bounded functions $\mathbb R \to \M$ equipped with the n.s.f. trace $$\varphi(f) = \int_{\mathbb R} \tau(f(x)) \, dx.$$ Its weak-operator closure is a von Neumann algebra $\A$. If $1 \le p \le \infty$, we write $L_p(\M)$ and $L_p(\A)$ for the noncommutative $L_p$ spaces associated to the pairs $(\M,\tau)$ and $(\A,\varphi)$. The set of all the elements with finite trace support in $\M$ is written as $S_\M$.
The set of dyadic intervals in $\mathbb R$ is denoted by $\mathcal D$ and we use $\mathcal D_k$ for the $k$-th generation, formed by intervals $I$ with side length $\ell(I) = 2^{-k}$.
We consider the associated filtration $(L_{\infty}(\mathcal D_k)\overline{\otimes}\mathcal M)_{k\in\mathbb Z}$ of $\mathcal A$, which will be simplified as $\Sigma_\mathcal A=(\A_k)_{k\in\mathbb Z}$. Let $\mathbb E_k$ and $\mathbb D_k$ denote the corresponding conditional expectations and martingale difference operators.

\subsection{Two auxiliary results.}

In the present section, we first show two auxiliary results with respect to the following two kinds of operators:
\begin{itemize}
\item{Noncommuting martingale transforms} $$M_\xi f = \sum_{k\in\mathbb Z} \xi_{k-1} \mathbb D_k(f),$$

\item{Haar multipliers with noncommuting symbol} $$\Lambda_b(f) = \sum_{k\in\mathbb Z} \mathbb D_k(b) \mathbb{E}_{k}(f).$$
\end{itemize}
Here $\xi_{k} \in \A_{k}$ is an adapted sequence. Of course, the symbols $\xi$ and $b$ do not necessarily commute with the function.  Our arguments on the operator-valued $\Lambda_b$ follow the ideas from \cite{BlPo08, BlPo10} and \cite{Mei07}.

 Let us first recall more definitions.

\bigskip

\noindent \textbf{Noncommutative martingale Hardy spaces.} Let
$1\leq p<\infty$. The column Hardy space $\mathrm{H}^c_p(\A,\Sigma_\mathcal A)$ is defined to be the completion of all finite $L_p$-martingales under the norm
$$\|f\|_{\mathrm{H}^c_p(\A,\Sigma_\mathcal A)}:=\|\big(\sum_{k\in\mathbb Z}\mathbb D_kf^*\mathbb D_kf\big)^{1/2}\|_p.$$
Taking adjoint---so that the $*$ switches from left to right--- we find the row-Hardy space norm and the space. The noncommutative Hardy space $\mathrm{H}_p(\mathcal A, \Sigma_\mathcal A)$, defined through column and row spaces differently for $1\leq p<2$ and $p>2$, was introduced in \cite{PiXu97}. In the same paper, the authors also introduced noncommutative martingale BMO spaces, and show the noncommutative Burkholder-Gundy inequality and Fefferman-Stein duality.  According to \cite{Mus03} it has the expected interpolation behavior in the scale of noncommutative $L_p$ spaces. Since we will not use these result in the present paper, we omit the details.

\bigskip

\noindent \textbf{Vector-valued BMO spaces.} The $\M$-valued martingale BMO space $\mathrm {BMO}(\mathcal A, \Sigma_\mathcal A)$ is defined to the set of $\mathcal M$-valued locally integrable functions with norm
$$\|f\|_{\mathrm {BMO}(\mathcal A, \Sigma_\mathcal A)}:=\sup_{k\in\mathbb Z}\|\mathbb{E}_k\|f-\mathbb E_{k-1}f\|^2_{\M}\|^{1/2}_{L_\infty(\mathbb R)}.$$
This space  is related to the vector-valued Hardy space $\mathrm{H}^m_1(\mathcal A, \Sigma_\mathcal A)$ whose norm is defined as
$$\|f\|_{\mathrm{H}^m_1(\mathcal A, \Sigma_\mathcal A)}:=\|\sup_{k\in\mathbb Z}\|\mathbb {E}_kf\|_{L_1(\mathcal M)}\|_{L_1(\mathbb R)}.$$
In fact, Bourgain \cite{Bou86} and Garcia-Cuerva proved independently that $\mathrm {BMO}(\mathcal A, \Sigma_\mathcal A)$ embeds continuously into the dual of $\mathrm{H}^m_1(\mathcal A, \Sigma_\mathcal A)$. That is
\begin{align}\label{dual of vector}
|\varphi(f^*g)|\lesssim \|f\|_{\mathrm {BMO}(\mathcal A, \Sigma_\mathcal A)}\|g\|_{\mathrm{H}^m_1(\mathcal A, \Sigma_\mathcal A)}.
\end{align}
We also need the following Doob's inequality for $L_p(\M)$-valued function: For all $1<p\leq\infty$ and $f\in L_p(\mathcal A)$
\begin{align}\label{doob vector}
\|\sup_{k\in\mathbb Z}\|\mathbb {E}_kf\|_{L_p(\mathcal M)}\|_{L_p(\mathbb R)}\lesssim \|f\|_{L_p(\A)}.
\end{align}

\bigskip

\begin{proposition}\label{prop:mart1}
If $\sup_k\|\xi_k\|_\A<\infty$, then
\begin{itemize}
\item $M_\xi$ is bounded from $L_p(\mathcal A)$ to $\mathrm{H}^c_p(\A,\Sigma_\A)$ whenever $2\leq p<\infty$;
\item $M_\xi$ is bounded from $\mathrm{H}^c_p(\A,\Sigma_\A)$ to $L_p(\mathcal A)$ whenever $1<p\leq2$.
\end{itemize}
\end{proposition}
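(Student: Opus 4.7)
The plan is to derive both statements from a single column Hardy bound
\begin{equation*}
\|M_\xi f\|_{\mathrm{H}^c_p(\A,\Sigma_\A)} \le C\,\|f\|_{\mathrm{H}^c_p(\A,\Sigma_\A)}, \qquad 1\le p\le\infty,
\end{equation*}
where $C:=\sup_k\|\xi_k\|_\A$, and then to convert this into the two claimed inequalities via the noncommutative Burkholder--Gundy inequality of Pisier--Xu~\cite{PiXu97}. The starting point is that since $\xi_{k-1}\in\A_{k-1}\subset\A_k$ and $\mathbb E_{k-1}(\mathbb D_k f)=0$, the product $\xi_{k-1}\mathbb D_k(f)$ lies in $\A_k$ with vanishing $\A_{k-1}$-conditional expectation, so the $k$-th martingale difference of $M_\xi f$ is simply $\mathbb D_k(M_\xi f)=\xi_{k-1}\mathbb D_k(f)$.

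The inequality $\xi_{k-1}^*\xi_{k-1}\le C^2\1$ then yields the pointwise operator estimate
\begin{equation*}
\sum_k \mathbb D_k(M_\xi f)^* \mathbb D_k(M_\xi f) \;=\; \sum_k (\mathbb D_k f)^*\,\xi_{k-1}^*\xi_{k-1}\,(\mathbb D_k f) \;\le\; C^2 \sum_k (\mathbb D_k f)^* (\mathbb D_k f),
\end{equation*}
justified first for finite sums and then for general $f$ by density. Taking the $L_p$-norm of the square root and using the monotonicity of $\|\cdot\|_p$ on positive operators yields the displayed column Hardy bound. The non-commutativity of the symbols causes no trouble because they act only from the left on each column entry.

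It remains to pass between $L_p(\A)$ and $\mathrm{H}^c_p(\A,\Sigma_\A)$ by noncommutative Burkholder--Gundy. For $2\le p<\infty$, Pisier--Xu gives $\|g\|_{\mathrm{H}^c_p(\A,\Sigma_\A)}\lesssim\|g\|_{L_p(\A)}$; applying this to $g=f$ and combining with the column bound proves the first bullet. For $1<p\le 2$, the trivial decomposition $g=g+0$ in the sum-space identity $L_p=\mathrm{H}^c_p+\mathrm{H}^r_p$ gives $\|g\|_{L_p(\A)}\lesssim\|g\|_{\mathrm{H}^c_p(\A,\Sigma_\A)}$; applying this to $g=M_\xi f$ and combining with the column bound proves the second bullet. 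There is no genuine obstacle here: the proposition essentially packages the observation that left multiplication by an adapted uniformly bounded sequence is obviously bounded on the column square function together with the standard $L_p$-to-column-Hardy equivalence.
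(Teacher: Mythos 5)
Your proof is correct and takes essentially the same approach as the paper: both establish the column-to-column bound $\|M_\xi f\|_{\mathrm{H}^c_p(\A,\Sigma_\A)}\le (\sup_k\|\xi_k\|_\A)\,\|f\|_{\mathrm{H}^c_p(\A,\Sigma_\A)}$ via the pointwise operator inequality $\sum_k (\mathbb D_k f)^*\xi_{k-1}^*\xi_{k-1}(\mathbb D_k f)\le C^2\sum_k(\mathbb D_k f)^*(\mathbb D_k f)$ and then invoke noncommutative Burkholder--Gundy in the appropriate direction for each range of $p$. Your write-up is somewhat more explicit than the paper's (which only treats $2\le p<\infty$ and says the other case is ``similar''), in particular in isolating the identity $\mathbb D_k(M_\xi f)=\xi_{k-1}\mathbb D_k f$ and in unifying both bullets through a single column-Hardy lemma, but the underlying argument is the same.
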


\begin{proof}
We only give the proof of the case $2\leq p<\infty$, another case can be shown similarly. Let $f\in L_p(\mathcal A)$. Using the fact $a^*ca\leq a^*a\|c\|_\infty$ for any $a\in\A$ and $c\in\A^+$ and $\xi_k\in\A_k$, it is easy to check
\begin{align*}
\|M_\xi(f)\|_{\mathrm{H}^c_p(\A,\Sigma_\A)}&=\|\big(\sum_{k\in\mathbb Z}\mathbb D_kf^*\xi^*_{k-1}\xi_{k-1}\mathbb D_kf\big)^{1/2}\|_p\\
&\leq \sup_k\|\xi_k\|_{L_\infty(\A)}\|\big(\sum_{k\in\mathbb Z}\mathbb D_kf^*\mathbb D_kf\big)^{1/2}\|_p\lesssim\|f\|_{L_p(\A)}.
\end{align*}
We have used the H\"older inequality and Burkholder-Gundy inequality in the inequalities.
\end{proof}

\begin{proposition}\label{prop:haar1}
If $b\in \mathrm {BMO}(\mathcal A, \Sigma_\mathcal A)$, then we have
\begin{itemize}
\item $\Lambda_b$ is bounded from $L_p(\mathcal A)$ to $\mathrm{H}^c_p(\A,\Sigma_\A)$ whenever $2\leq p<\infty$;
\item $\Lambda_b$ is bounded from $\mathrm{H}^c_p(\A,\Sigma_\A)$ to $L_p(\mathcal A)$ whenever $1<p\leq2$.
\end{itemize}
\end{proposition}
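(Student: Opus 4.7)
The plan is to establish the bound first for $2\leq p<\infty$; the range $1<p\leq 2$ then follows by duality between the column Hardy spaces $(\mathrm{H}^c_p)^*=\mathrm{H}^c_{p'}$, using that reversing operator order turns $\Lambda_b$ into a Haar multiplier of the same form with symbol $b^*$. As a preliminary algebraic step, I decompose
\[
\Lambda_b f \;=\; \pi_b f \;+\; R f,\qquad \pi_b f := \sum_k \mathbb{D}_k(b)\,\mathbb{E}_{k-1}(f),\quad R f := \sum_k \mathbb{D}_k(b)\,\mathbb{D}_k(f).
\]
A short dyadic check---using that $\mathbb{D}_k(b)$ and $\mathbb{D}_k(f)$ each change sign between the two children of any $I\in\mathcal{D}_{k-1}$---shows that each summand of $R$ is $\A_{k-1}$-measurable, while the summands of $\pi_b f$ are martingale differences at level $k$. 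Consequently $\mathbb{D}_k(\pi_b f) = \mathbb{D}_k(b)\,\mathbb{E}_{k-1}(f)$, which yields the explicit formula
\[
S^c(\pi_b f)^2 \;=\; \sum_k \mathbb{E}_{k-1}(f)^* \,|\mathbb{D}_k(b)|^2 \,\mathbb{E}_{k-1}(f).
\]

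The central estimate is a Carleson-type bound on $\pi_b$. I plan to compute its $\mathrm{H}^c_p$-norm by dualizing the $L_{p/2}(\A)$-norm of the above square function against a positive $g\in L_{(p/2)'}(\A)$; cyclicity of $\varphi$ recasts the pairing as $\sum_k \varphi\bigl(|\mathbb{D}_k(b)|^2\,\mathbb{E}_{k-1}(f)\,g\,\mathbb{E}_{k-1}(f)^*\bigr)$. The Bourgain-style BMO--$\mathrm{H}^m_1$ duality \eqref{dual of vector}, which is precisely designed to accommodate $\mathcal M$-valued operand ordering, should bound this by $\|b\|_{\mathrm{BMO}(\A,\Sigma_\A)}^2$ times the $\mathrm{H}^m_1$-norm of an adapted quantity built from $\mathbb{E}_{k-1}(f)\,g\,\mathbb{E}_{k-1}(f)^*$; the $L_p(\mathcal M)$-valued Doob inequality \eqref{doob vector} applied to $\mathbb{E}_{k-1}(f)$ then controls that norm by $\|f\|_{L_p(\A)}^2\,\|g\|_{L_{(p/2)'}(\A)}$, completing the paraproduct estimate.

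For the diagonal remainder $R$, an explicit Haar-coefficient computation gives $Rf = \sum_I |I|^{-1}\,b_I^{\Delta}\,f_I^{\Delta}\,1_I$ with $b_I^{\Delta}=\langle b,h_I\rangle$, and a Cauchy--Schwarz in operator form on each Haar scale together with the same Carleson/BMO bookkeeping used for $\pi_b$ will give $\|Rf\|_{\mathrm{H}^c_p}\lesssim \|b\|_{\mathrm{BMO}}\,\|f\|_p$ (alternatively, $R$ can be recognized as applying a martingale transform to $f$ with an adapted symbol built from $\mathbb{D}_k(b)$, whereupon Proposition \ref{prop:mart1} finishes, modulo a uniform estimate of that symbol via BMO). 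The main obstacle throughout is the noncommutativity between $b$ and $f$: one cannot pull $|\mathbb{D}_k(b)|^2$ past $\mathbb{E}_{k-1}(f)$ in the expression $\mathbb{E}_{k-1}(f)^*|\mathbb{D}_k(b)|^2\mathbb{E}_{k-1}(f)$, so the delicate point is invoking the operator-valued BMO--$\mathrm{H}^1$ duality in a way that keeps the operator order intact---which is exactly the role of the Bourgain formulation \eqref{dual of vector} recalled just before the proposition.
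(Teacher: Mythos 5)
Your decomposition $\Lambda_b f = \pi_b f + Rf$ is correct and is implicitly what the paper uses, but the plan to bound $\pi_b$ and $R$ \emph{separately} cannot work, and this is the central gap. As the paper stresses in its Introduction (see \eqref{bound of paraproduct1}), the operator-valued dyadic paraproduct $\pi_b(f)=\sum_k\mathbb{D}_k(b)\mathbb{E}_{k-1}(f)$ is \emph{not} bounded on $L_2(\mathcal{A})$ with a constant controlled by $\|b\|_{\mathrm{BMO}}$; in the $d\times d$ matrix case the norm grows like $\log d$ even for $b\in L_\infty$. Since $\mathrm{H}^c_2 = L_2(\mathcal{A})$, your proposed estimate $\|\pi_b f\|_{\mathrm{H}^c_p}\lesssim\|b\|_{\mathrm{BMO}}\|f\|_p$ is simply false at $p=2$. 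The whole point of Proposition \ref{prop:haar1} is that the \emph{symmetric combination} $\Lambda_b = \pi_b + R$ enjoys boundedness that its two halves individually lack.

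The specific steps that would break: your Carleson-type estimate
\[
\sum_k\varphi\bigl(|\mathbb{D}_k(b)|^2\,\mathbb{E}_{k-1}(f)\,g\,\mathbb{E}_{k-1}(f)^*\bigr)\;\lesssim\;\|b\|^2_{\mathrm{BMO}}\,\|f\|^2_{L_p}\,\|g\|_{L_{(p/2)'}}
\]
is exactly the (dimension-dependent) matrix-valued Carleson embedding whose failure drives \eqref{bound of paraproduct1}. The linear duality \eqref{dual of vector} does not yield it: if you write $\sum_k\varphi(\mathbb{D}_k(b)^*\mathbb{D}_k(b)F_{k-1})=\varphi\bigl(b^*\sum_k\mathbb{D}_k(b)F_{k-1}\bigr)$ and invoke \eqref{dual of vector} once, the symbol $b$ reappears in the $\mathrm{H}^m_1$ factor and the argument becomes circular. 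Your fallback for $R$ also fails: $\mathbb{D}_k(b)$ is $\mathcal{A}_k$-measurable (not $\mathcal{A}_{k-1}$-measurable), so $Rf=\sum_k\mathbb{D}_k(b)\mathbb{D}_k(f)$ is not of the form $\sum_k\xi_{k-1}\mathbb{D}_k(f)$ with an \emph{adapted} multiplying sequence $\xi$; the product $\mathbb{D}_k(b)\mathbb{D}_k(f)$ is $\mathcal{A}_{k-1}$-measurable, but the left factor alone is not, so Proposition \ref{prop:mart1} does not apply.

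The paper's proof avoids all of this by never isolating $\pi_b$: it dualizes the full operator against $g\in\mathrm{H}^c_q$, uses orthogonality of martingale differences to rewrite
\[
\varphi(\Lambda_b(f)g^*)=\varphi\Bigl(b\sum_k\bigl(\mathbb{E}_{k-1}(f)\mathbb{D}_k(g^*)+\mathbb{D}_k(f)\mathbb{E}_{k-1}(g^*)\bigr)\Bigr),
\]
and then applies the BMO--$\mathrm{H}^m_1$ duality \eqref{dual of vector} \emph{once, linearly}, to $b$ and the bilinear adapted quantity $h(f,g)$. The required cancellation between the paraproduct and diagonal contributions is encoded in the telescoping identity $\sum_{k\le\ell}\bigl(\mathbb{E}_{k-1}(f)\mathbb{D}_k(g^*)+\mathbb{D}_k(f)\mathbb{E}_{k-1}(g^*)\bigr)=\mathbb{E}_\ell(f)\mathbb{E}_\ell(g^*)-\sum_{k\le\ell}\mathbb{D}_k(f)\mathbb{D}_k(g^*)$, after which the bounds follow from Doob's inequality \eqref{doob vector} and a Cauchy--Schwarz over the Schatten index. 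If you want to salvage your approach, you should replace the square-function strategy for $\pi_b$ by a single-duality argument that never detaches $\pi_b$ from $R$.
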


\begin{proof}
We only provide the proof of the case $2\leq p<\infty$, since another case can be shown similarly. Let $q$ be the conjugate index of $p$. Let $f\in L_p(\A)$, and $g\in \mathrm{H}^c_q(\A,\Sigma_\A)$. By duality, it suffices to show
$$|\varphi(\Lambda_b(f)g^*)|\lesssim \|f\|_{L_p(\A)}\|g\|_{\mathrm{H}^c_q(\A,\Sigma_\A)}.$$
 Using the assumption that $\mathbb D_k(b)\mathbb D_k(f)\in\A_{k-1}$ for each $k$, we have
\begin{align*}
|\varphi(\Lambda_b(f)g^*)|&=|\sum_{k\in\mathbb Z} \varphi(\mathbb D_k(b) \mathbb{E}_{k}(f)g^*)|\\
&=|\sum_{k\in\mathbb Z} \varphi(\mathbb D_k(b) \mathbb{E}_{k-1}(f)g^*+\mathbb D_k(b) \mathbb{D}_{k}(f)g^*)|\\
&=|\sum_{k\in\mathbb Z} \varphi(\mathbb D_k(b) \mathbb{E}_{k-1}(f)\mathbb{D}_k(g^*)+\mathbb D_k(b) \mathbb{D}_{k}(f)\mathbb E_{k-1}(g^*))|\\
&=| \varphi(b \sum_{k\in\mathbb Z}(\mathbb{E}_{k-1}(f)\mathbb{D}_k(g^*)+\mathbb{D}_{k}(f)\mathbb E_{k-1}(g^*)))|.
\end{align*}
Hence by duality between vector-valued BMO space and Hardy space, we have
\begin{align*}
&|\varphi(\Lambda_b(f)g^*)|\\
&\lesssim \|b\|_{\mathrm {BMO}(\mathcal A, \Sigma_\mathcal A)}\|\sum_{k\in\mathbb Z}\mathbb{E}_{k-1}(f)\mathbb{D}_k(g^*)+\sum_{k\in\mathbb Z}\mathbb{D}_{k}(f)\mathbb E_{k-1}(g^*)\|_{\mathrm {H}^m_1(\mathcal A, \Sigma_\mathcal A)}\\
&= \|b\|_{\mathrm {BMO}(\mathcal A, \Sigma_\mathcal A)}\int_{\mathbb R}\sup_{\ell\in\mathbb Z}\|\sum^\ell_{k=-\infty}\mathbb{E}_{k-1}(f)\mathbb{D}_k(g^*)+\sum^\ell_{k=-\infty}\mathbb{D}_{k}(f)\mathbb E_{k-1}(g^*)\|_{L_1(\M)}dx.
\end{align*}
Using the identity for each $\ell\in\mathbb Z$
\begin{eqnarray*}
&\sum^\ell_{k=-\infty}\mathbb{E}_{k-1}(f)\mathbb{D}_k(g^*)+\sum^\ell_{k=-\infty}\mathbb{D}_{k}(f)\mathbb E_{k-1}(g^*)\\
&=\mathbb E_\ell(f)\mathbb {E}_\ell(g^*)-\sum^\ell_{k=-\infty}\mathbb D_k(f)\mathbb {D}_k(g^*),
\end{eqnarray*}
we are reduced to show
\begin{align}\label{key1}
\int_{\mathbb R}\sup_{\ell\in\mathbb Z}\|\mathbb E_\ell(f)\mathbb {E}_\ell(g^*)\|_{L_1(\M)}dx\lesssim\|f\|_{L_p(\A)}\|g\|_{\mathrm{H}^c_q(\A,\Sigma_\A)}
\end{align}
and
\begin{align}\label{key2}
\int_{\mathbb R}\sup_{\ell\in\mathbb Z}\|\sum^\ell_{k=-\infty}\mathbb D_k(f)\mathbb {D}_k(g^*)\|_{L_1(\M)}dx\lesssim\|f\|_{L_p(\A)}\|g\|_{\mathrm{H}^c_q(\A,\Sigma_\A)}.
\end{align}
The first estimate is relatively easy to handle. Using twice the H\"older inequalities and vector-valued Doob's inequality \eqref{doob vector}, the left hand side of  \eqref{key1} is controlled by
\begin{align*}
&\leq  (\int_{\mathbb R}\sup_{\ell\in\mathbb Z}\|\mathbb E_\ell(f)\|^p_{L_p(\M)}dx)^{1/p}(\int_{\mathbb R}\sup_{\ell\in\mathbb Z}\|\mathbb E_\ell(g)\|^q_{L_q(\M)}dx)^{1/q}\\
&\lesssim \|f\|_{L_{p}(\A)}\|g\|_{L_q(\A)}\lesssim \|f\|_{L_{p}(\A)}\|g\|_{\mathrm{H}^c_q(\A,\Sigma_\A)},
\end{align*}
where we used noncommutative Burkholder-Gundy inequality for $q\leq2$ in the last inequality.

To show the second estimate \eqref{key2}, we only need to show for any $\ell$
$$\|\sum^\ell_{k=-\infty}\mathbb D_k(f)\mathbb {D}_k(g^*)\|_{L_1(\M)}\leq \|(\sum_{k\in\mathbb Z}|\mathbb D_k(f)|^2)^{1/2}\|_{L_p(\M)}\|(\sum_{k\in\mathbb Z}|\mathbb D_k(g)|^2)^{1/2}\|_{L_q(\M)},$$
since then we can follow similar arguments as in the \eqref{key1}.
By duality and the H\"older inequality,
\begin{align*}
&\|\sum^\ell_{k=-\infty}\mathbb D_k(f)\mathbb {D}_k(g^*)\|_{L_1(\M)}\\
&=\sup_{u,\;\|u\|_\M\leq1}|\tau(u\sum^\ell_{k=-\infty}\mathbb D_k(f)\mathbb {D}_k(g^*))|=\sup_{u,\;\|u\|_\M\leq1}|\tau(\sum^\ell_{k=-\infty}\mathbb {D}_k(g^*)(u\mathbb D_k(f)))|\\
&=\sup_{u,\;\|u\|_\M\leq1}|\tau\otimes tr((\sum^\ell_{k=-\infty}\mathbb {D}_k(g^*)\otimes e_{1k})(\sum^\ell_{k=-\infty}u\mathbb D_k(f)\otimes e_{k1}))|\\
&\leq \sup_{u,\;\|u\|_\M\leq1}\|\sum^\ell_{k=-\infty}\mathbb {D}_k(g^*)\otimes e_{1k}\|_{L_q(\M\overline{\otimes}\mathcal B(\ell_2))}\|\sum^\ell_{k=-\infty}u\mathbb D_k(f)\otimes e_{k1}\|_{L_p(\M\overline{\otimes}\mathcal B(\ell_2))}\\
&\leq\|(\sum_{k\in\mathbb Z}|\mathbb D_k(f)|^2)^{1/2}\|_{L_p(\M)}\|(\sum_{k\in\mathbb Z}|\mathbb D_k(g)|^2)^{1/2}\|_{L_q(\M)}.
\end{align*}
\end{proof}


\subsection{Representation of perfect dyadic CZOs.}  To the best of our knowledge, the notion of perfect dyadic CZO was rigorously defined for the first time in \cite{AHMTT02}. Classical perfect dyadic CZOs include Haar multipliers/martingale transforms and dyadic paraproducts or their adjoints. In the cited paper, they also show that these operators and their combinations are the only perfect dyadic CZOs. That is, any operator-valued perfect dyadic CZO is a sum of one noncommutative dyadic martingale transform, one noncommutative dyadic paraproduct and its adjoint.

Let us fix some notations in the present section. If $f: \R \to \M$ is integrable on $I \in \mathcal D$, we set the average $$f_I = \mean_I f(x) \, dx.$$ If $1 \le p \le \infty$ and $f \in L_p(\A)$
$$\mathbb{E}_k(f):= \sum_{I\in\mathcal D_k} \mathbb E_I(f) := \sum_{I\in\mathcal D_k}  f_I 1_I, \;\; \mathbb D_k(f) := \sum_{I \in \mathcal D_{k-1}} \mathbb D_I(f) := \sum_{I \in \mathcal D_{k-1}}\langle h_I,f\rangle h_I,$$
where $h_I:=|I|^{-1/2}(1_{I_+}-1_{I_-})$ is the Haar function and $\langle \cdot,\cdot\rangle$ denotes the operator-valued inner product anti-linear in first coordinate. We will use $\langle\langle \cdot,\cdot\rangle\rangle$ to denote the inner product in $L_2(\mathcal A)$ anti-linear in first coordinate.

\begin{lemma}\label{lem:rep of perfect}
Let $T$ be an operator-valued perfect dyadic CZO. Then for $f,g\in\mathcal S(\mathbb R)\otimes S_\M$,
\begin{align}\label{rep of perfect}
\nonumber \langle\langle g,T(f)\rangle\rangle=&\llangle g,\sum_{I\in\mathcal D}\langle h_I,T(h_I)\rangle\langle h_I,f\rangle h_I\rrangle\\
\;\;\;\;\;\;\;\;\;\;\;\;\;\;\;\;\;&+\llangle g,\sum_{I\in\mathcal D}\mathbb D_I((T^*1)^*)\mathbb D_{I}(f)\rrangle+\llangle g,\sum_{I\in\mathcal D}\mathbb D_I(T1)\mathbb E_{I}(f)\rrangle.
\end{align}
\end{lemma}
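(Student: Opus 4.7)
The plan is to expand both $f$ and $g$ in the Haar basis of $L_2(\A)$, write $\llangle g,T(f)\rrangle$ as a double sum of operator-valued matrix coefficients $\langle h_J,T(h_I)\rangle$ indexed by pairs $(I,J)\in\mathcal D\times\mathcal D$, and use perfect dyadicity to evaluate these coefficients so that exactly three types of surviving contributions reassemble into the three terms on the right-hand side of \eqref{rep of perfect}.

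The first key point, from which everything else flows, is the support property $\supp T(h_I)\subseteq I$. Indeed, any $x\notin I$ sits in an arbitrarily small dyadic ancestor disjoint from $I$, and perfect dyadicity forces $K(x,\cdot)$ to be constant on $I$; since $h_I$ has mean zero this gives $T(h_I)(x)=0$. The same argument applied to the adjoint kernel $K(y,x)^*$ yields $\supp T^*(h_J)\subseteq J$. Using this I evaluate $\langle h_J,T(h_I)\rangle$ in four cases. If $I$ and $J$ are disjoint, the support property gives $0$. If $I\subsetneq J$, then $h_J$ is constant on $I$ equal to $\epsilon_{I,J}|J|^{-1/2}$ (with $\epsilon_{I,J}\in\{\pm1\}$ according to whether $I\subseteq J_+$ or $I\subseteq J_-$), and the support property gives
$$\langle h_J,T(h_I)\rangle=\epsilon_{I,J}|J|^{-1/2}\int T(h_I)\,dx=\epsilon_{I,J}|J|^{-1/2}\langle h_I,(T^*1)^*\rangle,$$
the last step being the adjoint identity $\int T(h_I)=\langle 1,T(h_I)\rangle=\langle T^*1,h_I\rangle=\langle h_I,(T^*1)^*\rangle$, which is meaningful because $h_I$ is mean-zero. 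Symmetrically, if $J\subsetneq I$ then $\langle h_J,T(h_I)\rangle=\epsilon_{J,I}|I|^{-1/2}\langle h_J,T1\rangle$; the diagonal case $I=J$ leaves $\langle h_I,T(h_I)\rangle$ as is.

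Writing $f=\sum_I h_I\langle h_I,f\rangle$ and $g=\sum_J h_J\langle h_J,g\rangle$ and pulling out scalars, one obtains
$$\llangle g,T(f)\rrangle=\sum_{I,J\in\mathcal D}\tau\bigl(\langle h_J,g\rangle^*\langle h_J,T(h_I)\rangle\langle h_I,f\rangle\bigr),$$
and I split this sum according to the above case analysis. The diagonal case produces the first term in \eqref{rep of perfect}. For the case $I\subsetneq J$, I fix $I$ and sum over $J\supsetneq I$; the identity $g_I^*=\sum_{J\supsetneq I}\epsilon_{I,J}|J|^{-1/2}\langle h_J,g\rangle^*$ (obtained by averaging the Haar expansion of $g$ over $I$, since $h_J$ restricted to $I$ equals $\epsilon_{I,J}|J|^{-1/2}$ for $J\supsetneq I$ and integrates to $0$ otherwise) collapses this contribution to $\sum_I\tau(g_I^*\langle h_I,(T^*1)^*\rangle\langle h_I,f\rangle)$, which is exactly $\llangle g,\sum_I\mathbb D_I((T^*1)^*)\mathbb D_I(f)\rrangle$ after recognising $|I|^{-1}\langle h_I,(T^*1)^*\rangle\langle h_I,f\rangle\,1_I=\mathbb D_I((T^*1)^*)\mathbb D_I(f)$. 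For $J\subsetneq I$, fixing $J$ and summing over $I\supsetneq J$ uses the dual identity $f_J=\sum_{I\supsetneq J}\epsilon_{J,I}|I|^{-1/2}\langle h_I,f\rangle$ to produce $\sum_J\tau(\langle h_J,g\rangle^*\langle h_J,T1\rangle f_J)=\llangle g,\sum_I\mathbb D_I(T1)\mathbb E_I(f)\rrangle$, the paraproduct term.

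The main delicate point is the non-commutative bookkeeping: one has to keep the three operator factors $\langle h_J,g\rangle^*$, $\langle h_J,T(h_I)\rangle$, $\langle h_I,f\rangle$ in the correct order under the trace, and to distinguish $\langle h_I,T^*1\rangle^*$ from $\langle h_I,T^*1\rangle$ (the former equals $\langle h_I,(T^*1)^*\rangle$ since $h_I$ is real), which is precisely what forces $(T^*1)^*$ and not $T^*1$ to appear in the second term of \eqref{rep of perfect}. Convergence of the double sum is not an issue since $f,g\in\mathcal S(\mathbb R)\otimes S_\M$ have absolutely convergent Haar expansions in $L_2(\A)$ and the support property $\supp T(h_I)\subseteq I$ confines every off-diagonal pair $(I,J)$ to a single nesting direction.
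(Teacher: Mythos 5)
Your proof is correct and reaches the same representation by a somewhat different route. The paper starts from Figiel's telescopic identity
\begin{align*}
\llangle g,Tf\rrangle=\sum_{k}\big(\llangle\mathbb D_kg,T\mathbb D_kf\rrangle+\llangle\mathbb E_{k-1}g,T\mathbb D_kf\rrangle+\llangle\mathbb D_kg,T\mathbb E_{k-1}f\rrangle\big)=:A+B+C,
\end{align*}
expands each piece against shifted Haar functions $h_{I\dot{+}m}$, and uses perfect dyadicity to kill every $m\neq 0$ contribution, so that $A,B,C$ become precisely the three terms of \eqref{rep of perfect}. You instead expand both $f$ and $g$ fully in the Haar basis, write $\llangle g,Tf\rrangle$ as a double sum over pairs $(I,J)$, and split by relative inclusion; the paper's $B$ (resp.\ $C$) corresponds to your pairs with $I\subsetneq J$ (resp.\ $J\subsetneq I$), except that the paper packages the coarser-scale averaging into $\mathbb E_{k-1}$ while you spell it out via the identity $g_I^*=\sum_{J\supsetneq I}\epsilon_{I,J}|J|^{-1/2}\langle h_J,g\rangle^*$. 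The two are equivalent; yours is a touch more elementary and self-contained, while the paper's Figiel form is chosen because it transfers verbatim to the general (non-perfect) CZO argument of Section 3, as the authors explicitly note after the lemma. Two small remarks. First, the precise support fact needed is $\supp T(h_I)\subseteq I$ and $\supp T^*(h_I)\subseteq I$, which you correctly derive from the mean-zero property of $h_I$ together with the kernel constancy forced by \eqref{perfect}; this is sharper than the paper's blanket assertion that ``$T1_J$ is supported in $J$'', which does not follow from \eqref{perfect} alone (paraproducts give counterexamples) but is never actually used at that strength. Second, a phrasing slip: a point $x\notin I$ lies in an arbitrarily small dyadic \emph{interval} disjoint from $I$, not a dyadic ``ancestor''; the argument itself is sound.
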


This representation \eqref{rep of perfect} has been essentially verified in \cite{AHMTT02} using the language of wave package. Here, we prefer to give a proof using an alternate approach due to Figiel \cite{Fig90}, which motivates us to deduce a similar representation formula for general matrix-valued Calder\'on-Zygmund operators in the next section.

\begin{proof}
Without loss of generality, we can assume that both $f$ and $g$ are of the form $h\otimes m$ with $h$ being scalar-valued function and $m$ being an operator. Then the convergence of $\mathbb E_k(h)$ to $h$ as $k\rightarrow \infty$ and to $0$ as $k\rightarrow -\infty$ (both a.e. and in $L_p(\mathbb R)$) leads to Figiel's representation of $T$ as the telescopic series
\begin{align*}
\langle\langle g,Tf\rangle\rangle&=\sum^\infty_{k=-\infty} (\langle\langle \mathbb E_kg, T\mathbb E_kf\rangle\rangle-\langle\langle \mathbb E_{k-1}g,T\mathbb E_{k-1}f\rangle\rangle)\\
&=\sum^\infty_{k=-\infty} (\langle\langle \mathbb D_kg,T\mathbb D_kf,\rangle\rangle+\langle\langle \mathbb E_{k-1}g,T\mathbb D_{k}f\rangle\rangle+\langle\langle \mathbb D_{k}g,T\mathbb E_{k-1}f\rangle\rangle)\\
&:=A+B+C,
\end{align*}
where, upon expanding in terms of the Haar functions,
$$A=\sum_{m\in\mathbb Z}\sum_{I\in\mathcal D}\langle\langle g,\langle h_{I\dot{+}m},Th_I\rangle\langle h_I,f\rangle h_{I\dot{+}m}\rangle\rangle,$$
\begin{align*}
B=\sum_{m\in\mathbb Z}\sum_{I\in\mathcal D}\langle\langle g,\langle \frac{1_{I\dot{+}m}}{|I\dot{+}m|},Th_I\rangle\langle h_I,f\rangle \frac{1_{{I\dot{+}m}}}{|I_{I\dot{+}m}|}\rangle\rangle,
\end{align*}
and
\begin{align*}
C=\sum_{m\in\mathbb Z}\sum_{I\in\mathcal D}\langle\langle g,\langle h_{I},T\frac{1_{{I\dot{+}m}}}{|I_{I\dot{+}m}|}\rangle f_{I\dot{+}m}h_{I}\rangle\rangle.
\end{align*}
Here $I\dot{+}m:=I+\ell(I)m$ is the translation of a dyadic interval $I$ by $m\in\mathbb Z$ times its sidelength $\ell(I)$.
Now by the perfect property of the kernel \eqref{perfect}---$T1_J$ is supported in $J$ for any dyadic interval $J$, we see that only the term $m=0$ in the summation contributes. Then observing that $|h_I|^2=1_I/|I|$, we see clearly
$$B=\llangle g,\sum_{I\in\mathcal D}\mathbb D_I((T^*1)^*)\mathbb D_{I}(f)\rrangle$$
and
$$C=\llangle g,\sum_{I\in\mathcal D}\mathbb D_I(T1)\mathbb E_{I}(f)\rrangle$$
finishing the proof.
\end{proof}

\subsection{Proof of Theorem \ref{thm:perfect}.} From the representation \eqref{rep of perfect}, using the symmetric condition \eqref{symmetry1}, we clearly have $T(f)=M_{\xi}(f)+\Lambda_b(f)$ with
\begin{align*}
\xi_{k}=\sum_{I\in\mathcal D_k}\langle h_I,T(h_I)\rangle\; 1_I,\quad \; b=T1.
\end{align*}
Then observing that $\|\cdot\|_{\mathrm{H}^c_2(\A,\Sigma_\A)}=\|\cdot\|_{L_2(\mathcal A)}$, we finish the proof using Proposition \eqref{prop:mart1} and \eqref{prop:haar1} since WBP condition \eqref{wbp condition2} ensures $\sup_k\|\xi_k\|_\A<\infty$, while BMO condition \eqref{bmo condition2} ensures $b\in\mathrm{BMO}(\A,\Sigma_\A)$.



\section{General CZOs: proof of Theorem \ref{thm:general}}
As in the proof of classical $T1$ theorem, the most difficult part of Theorem \ref{thm:general} is the case $p=2$, and other cases will follow by standard arguments. We will summarize the proof at the end of this section. For the case $p=2$, as mentioned in the Introduction, we will use the method of random dyadic cubes first introduced in \cite{NTV03}, later modified in \cite{Hyt10}. For the sake of completeness, let us recall necessary details of this approach in the present paper. We refer the reader to the previously cited papers for more information.

\subsection{Radom dyadic system}
Let $\mathcal D^0:=\bigcup_{j\in\mathbb Z}\mathcal D^0_j$, $\mathcal D^0_j:=\{2^{-j}([0,1)^n+m):\;m\in\mathbb Z^n\}$ be the standard system of dyadic cubes---the one in the previous section when $n=1$. For every $\beta=(\beta_j)_{j\in \mathbb Z}\in(\{0,1\}^n)^{\mathbb Z}$, consider the dyadic system $\mathcal D^\beta=\{I+\beta:I\in\mathcal D^0\}$ where $I+\beta:=I+\sum_{i:2^{-i}<\ell(I)}2^{-i}\beta_i$.

The product probability $\mathbb P_{\beta}$ on $(\{0,1\}^n)^\mathbb Z$ induces a probability on the family of all dyadic systems $\mathcal D^\beta$. Consider for a moment a fixed dyadic system $\mathcal D=\mathcal D^\beta$ for some $\beta$. A cube $I\in\mathcal D$ is called `bad' (with parameters $r\in\mathbb Z_+$ and $\gamma\in(0,1)$) if there holds
$$\mathrm{dist}(I,J^c)\leq\ell(I)^{\gamma}\ell(J)^{1-\gamma} \quad\mathrm{for}\;\mathrm{some}\;J=I^{(k)},\quad k\geq r,$$
where $I^{(k)}$ denotes the $k$-th dyadic ancestor of $I$. Otherwise, $I$ is said to be `good'.

Fixing a $I\in\mathcal D^0$, consider the random event that its shift $I+\beta$ is bad in $\mathcal D^\beta$. Because of the symmetry it is obvious that the probability $\mathbb P_\beta(I+\beta\;\mathrm{is}\;\mathrm{bad})$ is independent of the cube $I$, and we denote it by $\pi_{\mathrm{bad}}$; similarly one defines $\pi_{\mathrm{good}}=1-\pi_{\mathrm{bad}}$. The only thing that is needed about this number in the present paper as in \cite{NTV03} \cite{Hyt12} is that $\pi_{\mathrm{bad}}<1$, and hence $\pi_{\mathrm{good}}>0$, as soon as $r$ is chosen sufficiently large. We henceforth consider the parameters $\gamma$ and $r$ being fixed in such a way.

Note that
$$\pi_{\mathrm{good}}=\mathbb P_{\beta}(I+\beta\; \mathrm{is} \;\mathrm{good})=\mathbb E_{\beta}1_{\mathrm{good}}(I+\beta)$$
which is independent of the particular cube $I$.
Then as in \cite{Hyt10}, using the fact that the event that $I+\beta$ is good is independent of the position of the cube $I+\beta$, hence of the function $\phi(I+\beta)$, for $\phi(I)$ defined on all the cubes, we have
\begin{align}\label{key iden of prob}
\pi_{\mathrm{good}}\mathbb E_{\beta}\sum_{I\in\mathcal D^\beta}\phi(I)&=\sum_{I\in\mathcal D^0}\mathbb E_{\beta}1_{\mathrm{good}}(I+\beta)\mathbb E_{\beta}\phi(I+\beta)\\
\nonumber &=\sum_{I\in\mathcal D^0}\mathbb E_{\beta}(1_{\mathrm{good}}(I+\beta)\phi(I+\beta))=\mathbb E_{\beta}\sum_{I\in\mathcal D^\beta_{\mathrm{good}}}\phi(I).
\end{align}
This identity is the only thing from the probabilistic approach that we will use in the present paper.

\subsection{Representation of general CZOs.}
Fix a $\beta\in(\{0,1\}^n)^\mathbb Z$. For $\mathcal D=\mathcal D^{\beta}$, let $\mathbb E_k$ be the associated conditional expectation with respect to $\mathcal D_k$, and $\mathbb D_k:=\mathbb E_k-\mathbb E_{k-1}$. These operators can be represented by the Haar functions $h^{\theta}_I$, $\theta\in\{0,1\}^n$, which is defined as follows: When $n=1$,
$$h^0_I:=|I|^{\frac12}1_I\;, h^1_{I}:=|I|^{\frac12}(1_{I_+}-1_-);$$
When $n\geq2$,
$$h^{\theta}_I(x):=h^{(\theta_1,,\dotsm,\theta_n)}_{I_1\times \dotsm\times I_n}(x_1,\dotsm,x_n)=\prod^n_{i=1}h^{\theta_i}_{I_i}(x_i).$$
Then
$$\mathbb E_k(f)=\sum_{I\in\mathcal D_k}h_I^0\langle h^0_I,f\rangle,\;\mathbb D_k(f)=\sum_{I\in\mathcal D_{k-1}}\sum_{\theta\in\{0,1\}^n\setminus\{0\}}h_I^\theta\langle h^\theta_I,f\rangle.$$
The translation of a dyadic cube $I$ by $m\in\mathbb Z^n$ times its sidelength $\ell(I)$, is defined similarly as $I\dot{+}m:=I+m\ell(I)$.

As in Lemma \ref{lem:rep of perfect}, we also have Figiel's representation of an operator-valued Calder\'on-Zygmund operator. Let  $f,g\in\mathcal S(\mathbb R^n)\otimes S_\M$.

\begin{align}\label{1}
\langle\langle g,Tf\rangle\rangle&=\sum^\infty_{k=-\infty} (\langle\langle \mathbb D_kg,T\mathbb D_kf\rangle\rangle+\langle\langle \mathbb E_{k-1}g,T\mathbb D_{k}f\rangle\rangle+\langle\langle \mathbb D_{k}g,T\mathbb E_{k-1}f\rangle\rangle)\\
\nonumber&=:A+B+C,
\end{align}
where
$$A=\sum_{\eta,\theta\in\{0,1\}^n\setminus\{0\}}\sum_{m\in\mathbb Z^n}\sum_{I\in\mathcal D}\langle\langle g,\langle h^{\eta}_{I\dot{+}m},Th^\theta_I\rangle\langle h^\theta_I,f\rangle h^\eta_{I\dot{+}m}\rangle\rangle;$$
\begin{align*}
B&=\sum_{\theta\in\{0,1\}^n\setminus\{0\}}\sum_{m\in\mathbb Z^n}\sum_{I\in\mathcal D}\langle\langle g,\langle h^0_{I\dot{+}m},Th^\theta_I\rangle\langle h^\theta_I,f\rangle h^0_{I\dot{+}m}\rangle\rangle\\
&=\sum_{\theta\in\{0,1\}^n\setminus\{0\}}\sum_{m\in\mathbb Z^n}\sum_{I\in\mathcal D}\langle\langle g,\langle h^0_{I\dot{+}m},Th^\theta_I\rangle\langle h^\theta_I,f\rangle (h^0_{I\dot{+}m}-h^0_I)\rangle\rangle\\
&+\sum_{\theta\in\{0,1\}^n\setminus\{0\}}\sum_{I\in\mathcal D}\langle\langle g,\langle h^\theta_I,(T^*1)^*\rangle\langle h^\theta_I,f\rangle 1_{I}/|I|\rangle\rangle=:B^0+P;
\end{align*}
and
\begin{align*}
C&=\sum_{\theta\in\{0,1\}^n\setminus\{0\}}\sum_{m\in\mathbb Z^n}\sum_{I\in\mathcal D}\langle\langle g,\langle h^\theta_{I},Th^0_{I\dot{+}m}\rangle\langle h^0_{I\dot{+}m},f\rangle h^\theta_{I}\rangle\rangle\\
&=\sum_{\theta\in\{0,1\}^n\setminus\{0\}}\sum_{m\in\mathbb Z^n}\sum_{I\in\mathcal D}\langle\langle g,\langle h^\theta_{I},Th^0_{I\dot{+}m}\rangle\langle h^0_{I\dot{+}m}-h^0_I,f\rangle h^\theta_{I}\rangle\rangle\\
&+\sum_{\theta\in\{0,1\}^n\setminus\{0\}}\sum_{I\in\mathcal D}\langle\langle g,\langle h^\theta_{I},T1\rangle\langle 1_{I}/|I|,f\rangle h^\theta_{I}\rangle\rangle=:C^0+Q.
\end{align*}

Taking integral $\mathbb E_\beta$ on both sides of identity \eqref{1}, and then using the identity \eqref{key iden of prob}, we get
$$\langle\langle g,Tf\rangle\rangle=\frac{1}{\pi_{\mathrm{good}}}\mathbb E_\beta(A_{\mathrm{good}}+B^0_{\mathrm{good}}+C^0_{\mathrm{good}})+\mathbb E_\beta(P+Q),$$
where for instance
$$A_{\mathrm{good}}=A^{\beta}_{\mathrm{good}}=\sum_{\eta,\theta\in\{0,1\}^n\setminus\{0\}}\sum_{m\in\mathbb Z^n}\sum_{I\in\mathcal D^\beta_{\mathrm{good}}}\langle\langle g,\langle h^{\eta}_{I\dot{+}m},Th^\theta_I\rangle\langle h^\theta_I,f\rangle h^\eta_{I\dot{+}m}\rangle\rangle.$$

The desired estimate
$$|\langle\langle g,Tf\rangle\rangle|\lesssim\|f\|_{L_2(\mathcal A)}\|g\|_{L_2(\mathcal A)}$$
is reduced to the corresponding uniform estimate (in $\beta$) for $A_{\mathrm{good}}$, $B^0_{\mathrm{good}}$, $C^0_{\mathrm{good}}$ and $P+Q$.

\bigskip

\noindent{\bf Estimate of $A_{\mathrm{good}}$}. This term can be estimated directly since $\{h^\eta_{I\dot{+}m}\}_{I\in\mathcal D}$ form a martingale difference sequence for fixed $\eta,m$.
\begin{align*}|A_{\mathrm{good}}|&\leq \sum_{\eta,\theta\in\{0,1\}^n\setminus\{0\}}\sum_{m\in\mathbb Z^n}|\langle\langle g,\sum_{I\in\mathcal D_{\mathrm{good}}}\langle h^{\eta}_{I\dot{+}m},Th^\theta_I\rangle\langle h^\theta_I,f\rangle h^\eta_{I\dot{+}m}\rangle\rangle|\\
&\leq \|g\|_{L_2(\mathcal A)}\sum_{\eta,\theta\in\{0,1\}^n\setminus\{0\}}\sum_{m\in\mathbb Z^n}\|\sum_{I\in\mathcal D_{\mathrm{good}}}\langle h^{\eta}_{I\dot{+}m},Th^\theta_I\rangle\langle h^\theta_I,f\rangle h^\eta_{I\dot{+}m}\|_{L_2(\mathcal A)}\\
&\leq \|g\|_{L_2(\mathcal A)}\sum_{\eta,\theta\in\{0,1\}^n\setminus\{0\}}\sum_{m\in\mathbb Z^n}\sup_{I \;\mathrm{cubes}}\|\langle h^{\eta}_{I\dot{+}m},Th^\theta_I\rangle\|_{\M}\big(\tau(\sum_{I\in\mathcal D_{\mathrm{good}}}|\langle h^\theta_I,f\rangle|^2)\big)^{\frac12}\\
&\lesssim \|g\|_{L_2(\mathcal A)}\sum_{\eta,\theta\in\{0,1\}^n\setminus\{0\}}\sum_{m\in\mathbb Z^n}(1+|m|)^{-n-\alpha}\|f\|_{L_2(\mathcal A)}\lesssim \|g\|_{L_2(\mathcal A)}\|f\|_{L_2(\mathcal A)}.
\end{align*}
Here we used the fact
\begin{align}\label{decay estimate}
\sup_{\eta,\theta\in\{0,1\}^n}\sup_{I \;\mathrm{cubes}}\|\langle h^{\eta}_{I\dot{+}m},Th^\theta_I\|_{\M}\lesssim (1+|m|)^{-n-\alpha}
\end{align}
which was essentially observed by Figiel following from the size condition \eqref{size2}, the smooth condition \eqref{regularity2} and the Weak Boundedness Condition \eqref{wbp condition3}.

\bigskip

\noindent{\bf Estimates of $B^0_{\mathrm{good}}$}. This term can not be estimated directly since $h^0_{I\dot{+}m}-h^0_I$ do not form a martingale difference sequence when $I$ runs over all elements in $\mathcal D_{\mathrm{good}}$, but can be achieved when $I$ runs over elements in some subcollections which partition $\mathcal D_{\mathrm{good}}$ as in \cite{Hyt10}.

For each $m$, let $M=M(m):=\max\{r,\lceil(1-\gamma)^{-1}\log^+_2|m|\rceil\}$. Let then $a(I):=\log_2\ell(I)$ mod $M+1$, and define $b(I)$ to be alternatingly $0$ and $1$ along each orbit of the permutation $I\rightarrow I\dot{+}m$ of $\mathcal D$. It has been proved in \cite{Hyt10} if $(a(I),b(I))=(a(J),b(J))$ for two different cubes $I,J\in\mathcal D_{\mathrm{good}}$, then the cubes satisfy the following $m$-compatibility condition: either the sets $I\cup(I\dot{+}m)$ and $J\cup(J\dot{+}m)$ are disjoint, or one of them, say $I\cup(I\dot{+}m)$, is contained in a dyadic subcube of $J$ or $J\dot{+}m$.

We can hence decompose $\mathcal D_{\mathrm{good}}$ into collections of pairwise $m$-compatible cubes by setting
$$\mathcal D^m_{k,v}:=\{I\in\mathcal D_{\mathrm{good}}:\;a(I)=k,\;b(I)=v\}, \quad k=0,\dotsm,M(m),\quad v=0,1.$$
The total number of these collections is $2(1+M(m))\lesssim (1+\log^+|m|)$.

Note that for fixed $k,v$, $\{h^0_{I\dot{+}m}-h^0_I\}_{I\in\mathcal D^m_{k,v}}$ form a martingale difference sequence. Thus
\begin{align*}
|B^0|&\leq\sum_{\theta\in\{0,1\}^n\setminus\{0\}}\sum_{m\in\mathbb Z^n}\sum_{k,v}|\langle\langle g,\sum_{I\in\mathcal D^m_{k,v}}\langle h^0_{I\dot{+}m},Th^\theta_I\rangle\langle h^\theta_I,f\rangle (h^0_{I\dot{+}m}-h^0_I)\rangle\rangle|\\
&\lesssim \|g\|_{L_2(\mathcal A)}\|f\|_{L_2(\mathcal A)}\sum_{m\in\mathbb Z^n}(1+|m|)^{-n-\alpha}(1+\log^+|m|)\lesssim \|g\|_{L_2(\mathcal A)}\|f\|_{L_2(\mathcal A)},
\end{align*}
where we used again the fact \eqref{decay estimate}

\bigskip

\noindent{\bf Estimate of $C^0_{\mathrm{good}}$}. This term can be dealt with similarly as $B^0_{\mathrm{good}}$ since we can rewrite
\begin{align*}
C^0=\sum_{\theta\in\{0,1\}^n\setminus\{0\}}\sum_{m\in\mathbb Z^n}\sum_{I\in\mathcal D}\langle\langle f^*,\langle h^\theta_{I},Th^0_{I\dot{+}m}\rangle \langle h^\theta_{I},g^*\rangle (h^0_{I\dot{+}m}-h^0_I)\rangle\rangle,
\end{align*}
in the same form with $B^0_{\mathrm{good}}$.

\bigskip

\noindent{\bf Estimate of $P+Q$}. The estimate of $P+Q$ is completed through a similar argument used for Haar multiplier in Proposition \ref{prop:haar1}.

\begin{lemma}\label{lem:haar2}
We have
\begin{align}
|P+Q|\lesssim \|f\|_{L_2(\mathcal A)}\|g\|_{L_2(\mathcal A)}\|T1\|_{\mathrm{BMO}(\mathcal A,\Sigma_\mathcal A)}\lesssim \|f\|_2\|g\|_{2}\|T1\|_{\mathrm{BMO}(\mathbb R^n;\M)}
\end{align}
with some constant independent of $\beta$. Here $\Sigma_\mathcal A$ is the filtration associated to the dyadic system $\mathcal D^\beta$.
\end{lemma}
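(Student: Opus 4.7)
The plan is to follow, in the $n$-dimensional setting, the scheme of Proposition~\ref{prop:haar1} applied to the \emph{combined} expression $P+Q$. The symmetric condition $(T^*1)^* = T1$ enters only once, at the start, to rewrite $P$ in a form parallel to $Q$; thereafter the argument is essentially identical to the one-dimensional Haar-multiplier proof already in hand.

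By the symmetric condition, $P = \sum_{\theta\ne 0}\sum_{I\in\mathcal D}\llangle g,\langle h_I^\theta,T1\rangle\langle h_I^\theta,f\rangle\, 1_I/|I|\rrangle$. Unfolding $\llangle\cdot,\cdot\rrangle = \varphi((\cdot)^*(\cdot))$, using trace cyclicity, and expanding $\langle h_I^\theta,T1\rangle = \int h_I^\theta(y)(T1)(y)\,dy$, then summing in $\theta\in\{0,1\}^n\setminus\{0\}$ via $\mathbb D_I(h) = \sum_{\theta\ne 0}h_I^\theta\langle h_I^\theta,h\rangle$ and $\mathbb E_I(h) = \langle 1_I/|I|,h\rangle\,1_I$, a direct computation yields
\begin{equation*}
P+Q = \varphi\!\left((T1)\cdot \sum_{k\in\Z}\bigl[\mathbb D_k(f)\mathbb E_{k-1}(g^*) + \mathbb E_{k-1}(f)\mathbb D_k(g^*)\bigr]\right),
\end{equation*}
which places $P+Q$ in precisely the shape treated in the proof of Proposition~\ref{prop:haar1}.

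From there the argument is verbatim. The BMO/$\mathrm{H}^m_1$ duality~\eqref{dual of vector} gives $|P+Q|\lesssim\|T1\|_{\mathrm{BMO}(\mathcal A,\Sigma_\mathcal A)}\,\|M\|_{\mathrm{H}^m_1(\mathcal A,\Sigma_\mathcal A)}$, where $M$ denotes the bracketed martingale bilinear expression, and the telescoping identity
\begin{equation*}
\sum_{k\le\ell}\bigl[\mathbb D_k(f)\mathbb E_{k-1}(g^*) + \mathbb E_{k-1}(f)\mathbb D_k(g^*)\bigr] = \mathbb E_\ell(f)\mathbb E_\ell(g^*) - \sum_{k\le\ell}\mathbb D_k(f)\mathbb D_k(g^*)
\end{equation*}
reduces $\|M\|_{\mathrm{H}^m_1(\mathcal A,\Sigma_\mathcal A)}$ to the estimates~\eqref{key1} and~\eqref{key2} with $p=q=2$, which were bounded by $\|f\|_{L_2(\mathcal A)}\|g\|_{L_2(\mathcal A)}$ through vector-valued Doob~\eqref{doob vector} and noncommutative Burkholder--Gundy. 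The resulting constants are universal, in particular independent of $\beta$. The second inequality in the statement is the standard comparison $\|h\|_{\mathrm{BMO}(\mathcal A,\Sigma_\mathcal A)} \lesssim \|h\|_{\mathrm{BMO}(\R^n;\M)}$, also uniform in $\beta$, since every element of $\mathcal D^\beta$ is a Euclidean cube and martingale BMO is equivalent to oscillation BMO on dyadic cubes. The only place needing care is the reorganization of $P+Q$ in the previous paragraph: because $\M$ is noncommutative, the order of the factors $\langle h_I^\theta,T1\rangle$, $\langle h_I^\theta,f\rangle$ and $\mathbb E_I(g^*)$ must be tracked through each use of cyclicity, but this is bookkeeping of exactly the same type as in the one-dimensional proof and is not a genuine obstacle.
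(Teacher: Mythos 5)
Your proof is correct and follows essentially the same route as the paper's: both rewrite $P$ and $Q$ into the form $\varphi\bigl((T1)\sum_k[\mathbb D_k(f)\mathbb E_{k-1}(g^*)+\mathbb E_{k-1}(f)\mathbb D_k(g^*)]\bigr)$ using the symmetric condition, invoke the $\mathrm{BMO}$--$\mathrm{H}^m_1$ duality, and then reduce via the telescoping identity to the estimates \eqref{key1}--\eqref{key2} from Proposition~\ref{prop:haar1}. The only cosmetic difference is that you apply the symmetry $(T^*1)^*=T1$ at the outset to normalize $P$, whereas the paper first rewrites $P=\langle T^*1,\cdot\rangle$ and $Q=\langle(T1)^*,\cdot\rangle$ and then uses symmetry to merge them; the content is identical.
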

\begin{proof}
We first rewrite $P,Q$ as follows:
\begin{align*}
P=&\sum_{\theta\in\{0,1\}^n\setminus\{0\}}\sum_{I\in\mathcal D}\langle\langle T^*1,\langle h^\theta_I,f\rangle \langle1_{I}/|I|,g^*\rangle h^\theta_I\rangle\rangle\\
&=\langle T^*1,\sum_{k\in\mathbb Z}\mathbb D_k(f)\mathbb E_{k-1}(g^*)\rangle
\end{align*}
and
\begin{align*}
Q=&\sum_{\theta\in\{0,1\}^n\setminus\{0\}}\sum_{I\in\mathcal D}\langle\langle (T1)^*,\langle 1_{I}/|I|,f\rangle \langle h^\theta_{I},g^*\rangle h^\theta_{I}\rangle\rangle\\
&=\langle (T1)^*,\sum_{k\in\mathbb Z}\mathbb E_{k-1}(f)\mathbb D_{k}(g^*)\rangle.
\end{align*}
Then by the symmetry condition \eqref{symmetry2}, we get
\begin{align*}
|P+Q|\lesssim \|T^*1\|_{\mathrm{BMO}(\mathcal A,\Sigma_\mathcal A)}\|\sum_{k\in\mathbb Z}\mathbb D_k(f)\mathbb E_{k-1}(g^*)+\sum_{k\in\mathbb Z}\mathbb E_{k-1}(f)\mathbb D_{k}(g^*)\|_{\mathrm{H}^m_1(\mathcal A,\Sigma_\mathcal A)},
\end{align*}
which is controlled by
$$\|f\|_{L_2(\mathcal A)}\|g\|_{L_2(\mathcal A)}\|T^*1\|_{\mathrm{BMO}(\mathcal A,\Sigma_\mathcal A)}$$
by the same arguments in the proof of Proposition \ref{prop:haar1}. Noting that for $b=T^*1$
$$\|b\|_{\mathrm{BMO}(\mathbb R^n;\M)}:=\sup_J\big(\frac1{|J|}\int_J\|b-b_J\|^2_\mathcal M\;dx\big)^{\frac12},$$
where the supremum is taken over all the cubes $J$, while in the definition of $\mathrm{BMO}(\mathcal A,\Sigma_\mathcal A)$-norm, $J$ runs over all the elements in $\mathcal D^\beta$.
We get
$$\|T^*1\|_{\mathrm{BMO}(\mathcal A,\Sigma_\mathcal A)}\leq \|T^*1\|_{\mathrm{BMO}(\mathbb R^n;\M)}<\infty$$
by the assumption \eqref{bmo condition3},
and thus finish the proof.
\end{proof}

\begin{remark}
\rm{(i). Let $1<p<\infty$ and $q$ be the conjugate index of $p$. Then using the arguments in the proof of Proposition \ref{prop:haar1}, actually we are able to show
$$|P+Q|\ \ \lesssim\ \left\{ \begin{array}{c} \|f\|_{L_p(\mathcal A)}\|g\|_{\mathrm{H}^c_q(\mathcal A,\Sigma_\mathcal A)}\|T1\|_{\mathrm{BMO}(\mathbb R^n;\M)},\quad\mathrm{whenever}\;2\leq p<\infty; \\ [7pt] \|f\|_{\mathrm{H}^c_p(\mathcal A,\Sigma_\mathcal A)}\|g\|_{L_q(\mathcal A)}\|T1\|_{\mathrm{BMO}(\mathbb R^n;\M)},\quad\mathrm{whenever}\;1<p\leq2. \end{array} \right.$$

(ii). Here it is worthy to point out that in the argument above we did not use directly the boundedness of higher-dimensional Haar multiplier but the proof for one-dimensional Haar multiplier. We will see in the appendix that higher-dimensional Haar multiplier is more difficult to handle. }

\end{remark}

\bigskip

\subsection{Proof of Theorem \ref{thm:general}.} Combine the estimates of the four parts $A_{\mathrm{good}}$, $B^0_{\mathrm{good}}$, $C^0_{\mathrm{good}}$ and $P+Q$, we proved
\begin{align}\label{l2 estimate}
\|Tf\|_{L_2(\mathcal A)}\lesssim\|f\|_{L_2(\mathcal A)}.
\end{align}

To prove other cases $1<p\neq2<\infty$. We will use the atomic characterization of $\mathrm{H}_1^c(\mathbb R^n;\M)$, which was first introduced by one of us \cite{Mei07}. Let us first recall the definition. Let $1\leq p<\infty$. The Hardy space $\mathrm{H}^c_p(\R^n; \M)$ is defined to be the space of functions $f \in L_1(\A)$ for which we have
\begin{align*}
 \|f\|_{\mathrm{H}_p^c(\R^n;\M)}= \Big\| \Big( \int_\Gamma \Big[ \frac{\partial \widehat{f}^*}{\partial t} \frac{\partial \widehat{f}}{\partial t} + \summ_j \frac{\partial \widehat{f}^*}{\partial x_j} \frac{\partial \widehat{f}}{\partial x_j} \Big] (x + \cdot,t) \, \frac{dx dt}{t^{n-1}} \Big)^\frac12 \Big\|_{L_p(\mathcal A)}<\infty,
\end{align*}
with $\Gamma = \{ (x,t) \in \R^{n+1}_+ \, | \ |x| < t \}$ and $\widehat{f}(x,t) = P_t f(x)$ for the Poisson semigroup $(P_t)_{t \ge 0}$.

According to \cite{Mei07}, these Hardy spaces have nice duality and interpolation behavior. Observing that the adjoint operator $T^*$ and its kernel have same properties as $T$ and $K$, thus to finish the proof, it suffices to show
$$T: \mathrm{H}_1^c(\mathbb R^n;\M) \to L_1(\A).$$

On the other hand, $\mathrm{H}_1^c(\mathbb R^n;\M)$ has an atomic characterization. We say that $a \in L_1(\M; L_2^c(\R^n))$ is an atom if there exists a cube $I$ so that
\begin{itemize}
\item $\mathrm{supp} \hskip1pt a \subseteq I$,

\vskip8pt

\item $\displaystyle \int_I a(y) \, dy = 0$,

\item $\|a\|_{L_1(\M;L_2^c(\R^n))} = \displaystyle \tau \Big[ \big( \int_I |a(y)|^2 \, dy \big)^\frac12 \Big] \le \frac{1}{\sqrt{|I|}}$.
\end{itemize}
By \cite[Theorem 2.8]{Mei07}, we have $$\|f\|_{\mathrm{H}_1^c(\R^n;\M)} \, \sim \, \inf \Big\{ \summ_k |\lambda_k| \, \big| \ f = \summ_k \lambda_k a_k \ \mbox{with} \ a_k \ \mbox{atoms} \Big\}.$$

Therefore, we only need to find a uniform upper estimate for the $L_1$ norm of $T(a)$ valid for an arbitrary atom $$\|T(a)\|_{L_1(\mathcal A)} \, \le \, \big\| T(a) 1_{2I} \big\|_{L_1(\mathcal A)}  + \big\| T(a) 1_{\R^n \setminus 2I} \big\|_{L_1(\mathcal A)} .$$ The second term is dominated by
\begin{eqnarray*}
\big\| T(a) 1_{\R^n \setminus 2I} \big\|_{L_1(\mathcal A)}  & = & \tau \int_{\R^n \setminus 2I} \Big| \int_I K(x,y) a(y) \, dy \Big| \, dx \\ & \le & \int_I \Big( \int_{\R^n \setminus 2I} \big\| K(x,y) - K(x,c_I) \big\|_\M \, dx \Big) \tau |a(y)| \, dy \\ [6pt] & \lesssim & \tau \Big( \int_I |a(y)| \, dy \Big) \ \le \ \sqrt{|I|} \tau \Big[ \big( \int_I |a(y)|^2 \, dy \big)^\frac12 \Big] \ \le \ 1,
\end{eqnarray*}
where we have used Kadison-Schwarz inequality in the third inequality. As for the first term, it suffices to show that $T: L_1(\M; L_2^c(\R^n)) \to L_1(\M; L_2^c(\R^n))$, since then we find again
\begin{eqnarray*}
\big\| T(a) 1_{2I} \big\|_{L_1(\mathcal A)}  & = & \tau \Big( \int_{2I} |T(a)(x)| \, dx \Big) \\ & \le & \sqrt{|2I|} \, \tau \Big[ \big( \int_{2I} |T(a)(x)|^2 \, dx \big)^\frac12 \Big] \\ & \lesssim & \sqrt{|2I|} \, \tau \Big[ \big( \int_{I} |a(x)|^2 \, dx \big)^\frac12 \Big] \, \lesssim \, 1.
\end{eqnarray*}
The $L_1(\M; L_2^c(\R^n))$-boundedness of $T$ follows from the duality $$\big\| T(f) \big\|_{L_1(\M;L_2^c(\R^n))} \, \le \, \Big( \sup_{\|g\|_{L_\infty(L_2^c)} \le 1} \big\|T^*(g) \big\|_{L_\infty(\M;L_2^c(\R^n))} \Big) \|f\|_{L_1(\M;L_2^c(\R^n))}.$$ Recall that the adjoint $T^*$ has the same properties as $T$, and thus is bounded on $L_2(\mathcal A)$. This gives rise to
\begin{eqnarray*}
\big\| T^*(g) \big\|_{L_\infty(\M;L_2^c(\R^n))} & = & \Big\| \Big( \int_{\R^n} |T^*(g)(x)|^2 \, dx \Big)^\frac12 \Big\|_\M \\ & = & \sup_{\|u\|_{L_2(\M)} \le 1} \Big( \int_{\R^n} \big\langle |T^*(g)(x)|^2 u,u \big\rangle_{L_2(\M)} \, dx \Big)^\frac12 \\ & = & \sup_{\|u\|_{L_2(\M)} \le 1} \Big( \int_{\R^n} \big\|T^*(gu)(x) \big\|_{L_2(\M)}^2 \, dx \Big)^\frac12 \\ & \lesssim & \sup_{\|u\|_{L_2(\M)} \le 1} \Big( \int_{\R^n} \big\| g(x)u \big\|_{L_2(\M)}^2 \, dx \Big)^\frac12 \\ & = & \Big\| \Big( \int_{\R^n} |g(x)|^2 \, dx \Big)^\frac12 \Big\|_\M.
\end{eqnarray*}
The third identity above uses the right $\M$-module nature of $T$.

\begin{remark}
\emph{It is a quite interesting question to give a direct proof of Theorem \ref{thm:general} in the case $1<p\neq2<\infty$ without using atomic decomposition, interpolation and duality like the one for perfect dyadic CZOs. }
\end{remark}

\section{Appendix}
In this appendix, we show the following commutator estimate.

\begin{theorem}\label{thm:CoEs}
If $b\in\mathrm{BMO}(\mathbb R^n;\mathcal M)$, then the commutator $[R_j,b]$ is bounded on $L_2(\mathcal A)$. Moreover we have the estimate
\begin{align}\label{CoEs}
\|[R_j,b]f\|_{L_2(\mathcal A)}\lesssim \|b\|_{\mathrm{BMO}(\mathbb R^n;\mathcal M)}\|f\|_{L_2(\mathcal A)}.
\end{align}
\end{theorem}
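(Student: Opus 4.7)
The operator $[R_j, b]$ is not itself a Calder\'on-Zygmund operator in the sense of Theorem \ref{thm:general}: its kernel $c_n(x_j - y_j)|x - y|^{-n-1}(b(x) - b(y))$ fails the uniform size bound when $b$ lies only in BMO. Accordingly, Theorem \ref{thm:general} cannot be invoked directly, and the plan is to reduce $[R_j, b]$ to a dyadic model through a dyadic representation of $R_j$, then exploit the operator-valued Haar multiplier estimate of Proposition \ref{prop:haar1} together with the martingale transform bound of Proposition \ref{prop:mart1}.

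For the first step, I would use Hyt\"onen's dyadic representation theorem, which writes the Riesz transform as an expectation over random dyadic systems $\mathcal D^\beta$ (as in Section 3.1) of a convergent weighted sum
$$R_j = c_j\, \mathbb E_{\beta} \sum_{k, \ell \geq 0} 2^{-\max(k, \ell)\alpha/2}\, S^{k, \ell}_\beta,$$
where $S_\beta^{k, \ell}$ denotes a dyadic shift of complexity $(k, \ell)$. The scalar nature of the Riesz kernel makes this identity valid on operator-valued functions. Since $b$ is independent of $\beta$ and multiplication by $b$ is linear, the commutator distributes across the representation:
$$[R_j, b] = c_j\, \mathbb E_{\beta} \sum_{k, \ell \geq 0} 2^{-\max(k, \ell)\alpha/2}\, [S_\beta^{k, \ell}, b].$$
It therefore suffices to bound each $[S_\beta^{k, \ell}, b]$ on $L_2(\mathcal A)$ by $P(k, \ell)\, \|b\|_{\mathrm{BMO}(\mathbb R^n;\mathcal M)}$ for some polynomial $P$, uniformly in $\beta$; then the exponential factor $2^{-\max(k, \ell)\alpha/2}$ absorbs the polynomial and the sum over $(k, \ell)$ converges.

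For the second step, for a single dyadic shift $S$, I would decompose the multiplication operator $L_b(f) = bf$ via the paraproduct splitting
$$L_b(f) = \pi^*_b(f) + \Lambda_b(f), \qquad \pi^*_b(f) := \sum_{k \in \mathbb Z} \mathbb E_{k-1}(b)\, \mathbb D_k(f),$$
where $\Lambda_b$ is exactly the Haar multiplier from Section 2. Then $[S, b] f = [S, \pi^*_b] f + [S, \Lambda_b] f$. Expanding each commutator in the Haar basis and using that the shift coefficients $a_{I', I''}^I$ are bounded by $\sqrt{|I'|\,|I''|}/|I|$, both pieces rearrange into a finite sum of (i) Haar multipliers $\Lambda_c$ whose symbols satisfy $\|c\|_{\mathrm{BMO}(\mathcal A, \Sigma_\mathcal A)} \lesssim \|b\|_{\mathrm{BMO}(\mathbb R^n; \mathcal M)}$ (bounded on $L_2(\mathcal A)$ by Proposition \ref{prop:haar1}), and (ii) noncommuting martingale transforms with uniformly bounded adapted symbols (bounded by Proposition \ref{prop:mart1}). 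The domination $\|b\|_{\mathrm{BMO}(\mathcal A, \Sigma_\mathcal A)} \lesssim \|b\|_{\mathrm{BMO}(\mathbb R^n; \mathcal M)}$ holds uniformly in $\beta$, since the left-hand side takes its supremum over dyadic cubes of $\mathcal D^\beta$ and the right-hand side over all cubes.

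The main obstacle is the combinatorial rearrangement in the second step---identifying $[S^{k, \ell}, L_b]$ with the claimed combination of Haar multipliers and martingale transforms, with polynomial-in-$(k, \ell)$ operator-norm bound. In the scalar setting this was worked out by Petermichl, Hyt\"onen, and Pott by tracking Haar-expansion cancellations. In the operator-valued setting the algebraic identities go through unchanged, because multiplication by $b$ appears on only one side of each product, so the non-commutativity of $b$ with $f$ does not interfere with the bookkeeping. Once the rearrangement is in place, Propositions \ref{prop:mart1} and \ref{prop:haar1} close the estimate after summation in $(k, \ell)$ using the exponential decay from Hyt\"onen's representation.
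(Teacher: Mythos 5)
Your overall strategy---dyadic reduction of the Riesz transform, followed by the paraproduct splitting of multiplication by $b$---is the same as the paper's, and your observation that Theorem \ref{thm:general} does not apply directly is correct. However, the proposal has two genuine gaps and one avoidable complication.

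First, you appeal to Hyt\"onen's general dyadic representation theorem, which involves shifts of all complexities $(k,\ell)$ and requires a polynomial-in-$(k,\ell)$ commutator bound to be absorbed by the exponential decay. The paper instead uses Petermichl--Treil--Volberg \cite{PTV02}, which says the Riesz transforms lie in the closed convex hull of complexity-$(0,1)$ shifts of the specific form \eqref{shift}. This removes the polynomial bookkeeping entirely: the commutator with a complexity-one shift produces only coefficients $\langle b\rangle_I - \langle b\rangle_{\hat I}$, which are directly bounded in $\M$-norm by $\|b\|_{\mathrm{BMO}(\R^n;\M)}$, uniformly. Your route could in principle work but is strictly harder and the polynomial bound is not supplied.

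Second, and this is a real gap, you invoke Proposition \ref{prop:haar1} for the Haar multiplier, but that is the one-dimensional statement. The paper devotes Proposition \ref{prop:haar2} precisely to the $n$-dimensional Haar multiplier, pointing out that the one-dimensional proof breaks: $\mathbb D_k(b)\mathbb D_k(f)$ is no longer $\A_{k-1}$-measurable when $n>1$, so the cancellation exploited in Proposition \ref{prop:haar1} does not occur without further work (one has to split off a $\sum_k \mathbb D_k(\mathbb D_k(f)\mathbb D_k(g^*))$ term and handle the conditional expectation $\mathbb E_{k-1}(\mathbb D_k(f)\mathbb D_k(g^*))$ separately). This step must be supplied.

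Third, your central claim---that both $[S,\pi^*_b]$ and $[S,\Lambda_b]$ ``rearrange into a finite sum of Haar multipliers and martingale transforms''---is stated as the key step but not justified, and it is not what actually happens. The paper handles $[S,\Lambda_b]$ trivially: since $S$ and $\Lambda_b$ are each $L_2(\A)$-bounded (the latter by Proposition \ref{prop:haar2}), submultiplicativity gives $\|[S,\Lambda_b]\|\le 2\|S\|\,\|\Lambda_b\|$; no rearrangement is needed. For $[S,R_b]$ (your $[S,\pi^*_b]$) the paper derives the closed-form identity \eqref{key4}, which is a complexity-one shift with $\M$-valued coefficients $\pm(\langle b\rangle_I - \langle b\rangle_{\hat I})$---\emph{not} a martingale transform (the scales shift from $I$ to $\hat I$) nor a Haar multiplier---and bounds it directly by orthogonality of the Haar system together with the uniform bound $\|\langle b\rangle_I - \langle b\rangle_{\hat I}\|_{\M}\lesssim\|b\|_{\mathrm{BMO}(\R^n;\M)}$. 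You acknowledge the rearrangement is ``the main obstacle'' and then punt on it; as written the argument is incomplete at exactly the point where the work must be done.
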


When $n=1$, the Riesz transforms reduce to the Hilbert transform. By noting the boundedness of  the Haar multiplier---Proposition \ref{prop:haar1}, the result has been essentially proven by Petermichl, see Section 2.3 of \cite{Pet00}. When $n>1$, the situation becomes a little bit more complicated. Firstly, reviewing the proof of the boundedness of one-dimensional Haar multiplier, the higher-dimensional case is not trivial since $\mathbb D_kb\mathbb D_kf$ is not $k-1$-th measurable; Secondly, the higher-dimensional Haar systems are also more complicated.

Petermichl-Treil-Volberg in \cite{PTV02}  showed that the Riesz transforms also lie in the closed convex hull of some dyadic shifts. Let us write down explicitly the form of this class of dyadic shifts: Fix a dyadic system $\mathcal D$, let $\theta_0\in \{0,1\}^n$ be the element with first coordinate 1 and others 0,
\begin{align}\label{shift}
Sf=\sum_{I\in \mathcal D}\sum_{\theta\in\{0,1\}^n\setminus\{0\}}\varepsilon^\theta_I\langle  h^{\theta_0}_I,f\rangle h^{\theta}_{\hat{I}},
\end{align}
where $\hat{I}$ is the dyadic father of $I$ and $\varepsilon^\theta_I=\pm1$.

Associated to this fixed dyadic system $\mathcal D$, the Haar multiplier with noncommmuting symbol $b$ is defined as
$$\Lambda_b(f)=\sum_{k\in\mathbb Z}\mathbb D_k(b)\mathbb E_k(f).$$
As in the one-dimensional case---Proposition \ref{prop:haar1}, one also gets

\begin{proposition}\label{prop:haar2}
If $b\in \mathrm {BMO}(\mathbb R^n;\mathcal M)$, then we have
\begin{itemize}
\item $\Lambda_b$ is bounded from $L_p(\mathcal A)$ to $\mathrm{H}^c_p(\A,\Sigma_\A)$ whenever $2\leq p<\infty$;
\item $\Lambda_b$ is bounded from $\mathrm{H}^c_p(\A,\Sigma_\A)$ to $L_p(\mathcal A)$ whenever $1<p\leq2$.
\end{itemize}
\end{proposition}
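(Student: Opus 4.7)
The plan is to mirror the argument of Proposition~\ref{prop:haar1}. Writing $\varphi(\Lambda_b(f)g^*)=\sum_k\varphi(\mathbb D_k(b)\mathbb E_k(f)g^*)$, then using $\mathbb D_k(b)\mathbb E_k(f)\in\A_k$ to replace $g^*$ by $\mathbb E_k(g^*)$, expanding both $\mathbb E_k(f)=\mathbb E_{k-1}(f)+\mathbb D_k(f)$ and $\mathbb E_k(g^*)=\mathbb E_{k-1}(g^*)+\mathbb D_k(g^*)$, and invoking $\mathbb E_{k-1}\mathbb D_k(b)=0$, I obtain three contributions:
\[
\sum_k\varphi(\mathbb D_k(b)\mathbb E_{k-1}(f)\mathbb D_k(g^*)),\;\;\sum_k\varphi(\mathbb D_k(b)\mathbb D_k(f)\mathbb E_{k-1}(g^*)),\;\;\sum_k\varphi(\mathbb D_k(b)\mathbb D_k(f)\mathbb D_k(g^*)).
\]
The first two appear verbatim in the one-dimensional proof. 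The third is the genuinely new contribution: since $h^\theta_I h^\eta_I=|I|^{-1/2}h^{\theta\triangle\eta}_I$, the product $\mathbb D_k(f)\mathbb D_k(g^*)$ fails to be $\A_{k-1}$-measurable for $n\ge 2$, and the one-dimensional trick of absorbing it into the second term is unavailable.

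To treat all three contributions uniformly I would apply the duality $\varphi(\mathbb D_k(b)Y)=\varphi(b\,\mathbb D_k(Y))$ together with the BMO--$\mathrm H^m_1$ pairing \eqref{dual of vector}, reducing matters to an estimate for the $\mathrm H^m_1(\A,\Sigma_\A)$-norm of the martingale $Z=\sum_k Z_k$ whose differences are
\[
Z_k=\mathbb D_k(f)\mathbb E_{k-1}(g^*)+\mathbb E_{k-1}(f)\mathbb D_k(g^*)+\mathbb D_k(f)\mathbb D_k(g^*)-\mathbb E_{k-1}[\mathbb D_k(f)\mathbb D_k(g^*)].
\]
A telescoping computation analogous to the one in the proof of Proposition~\ref{prop:haar1} yields $\sum_{k\le\ell}Z_k=\mathbb E_\ell(f)\mathbb E_\ell(g^*)-\sum_{k\le\ell}\mathbb E_{k-1}[\mathbb D_k(f)\mathbb D_k(g^*)]$, so after taking the $L_1(\M)$-norm, the supremum in $\ell$, and integrating over $\mathbb R^n$, the $\mathbb E_\ell(f)\mathbb E_\ell(g^*)$-piece is handled by H\"older and the vector-valued Doob inequality \eqref{doob vector} precisely as in Proposition~\ref{prop:haar1}. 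What remains is the genuinely $n$-dimensional task of controlling the predictable bracket
\[
\sum_k \mathbb E_{k-1}[\mathbb D_k(f)\mathbb D_k(g^*)](x) \;=\; \sum_{I\ni x}\sum_{\theta\neq 0}|I|^{-1}\langle h^\theta_I,f\rangle\langle h^\theta_I,g^*\rangle.
\]

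The main obstacle is bounding this predictable bracket in $L_1(\M)$ pointwise in $x$. The tool is a noncommutative Cauchy--Schwarz of the form $\|\sum_j X_jY_j\|_{L_1(\M)}\le\|(\sum_j X_jX_j^*)^{1/2}\|_{L_2(\M)}\,\|(\sum_j Y_j^*Y_j)^{1/2}\|_{L_2(\M)}$ applied with $X_{I,\theta}=|I|^{-1/2}\langle h^\theta_I,f\rangle$ and $Y_{I,\theta}=|I|^{-1/2}\langle h^\theta_I,g^*\rangle$, which identifies the two factors with the conditional row square function of $f$ and the conditional column square function of $g^*$ at the point $x$. Integrating in $x$ via H\"older and invoking the standard noncommutative Stein/Burkholder inequalities for conditional square functions then delivers the bound $\lesssim\|b\|_{\mathrm{BMO}(\mathbb R^n;\M)}\|f\|_{L_p(\A)}\|g\|_{\mathrm H^c_q(\A,\Sigma_\A)}$ for $2\le p<\infty$; the range $1<p\le 2$ follows by the dual argument. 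The delicate point is that the noncommuting left/right placement of the Haar coefficients forces the simultaneous appearance of \emph{both} column and row square functions, which is precisely what obstructs a verbatim transcription of the one-dimensional proof.
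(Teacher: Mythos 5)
Your reduction to the predictable bracket is, up to reorganization, the same as the paper's. Unwinding $\varphi(\Lambda_b(f)g^*)=\varphi\bigl(b\bigl(fg^*-\sum_k\mathbb E_{k-1}[\mathbb D_kf\,\mathbb D_kg^*]\bigr)\bigr)$ and then peeling off the martingale $Z$ with $\sum_{k\le\ell}Z_k=\mathbb E_\ell(f)\mathbb E_\ell(g^*)-\sum_{k\le\ell}\mathbb E_{k-1}[\mathbb D_kf\,\mathbb D_kg^*]$ is exactly what the paper does, only there it is presented as first isolating $\sum_k\mathbb D_k[\mathbb D_kf\,\mathbb D_kg^*]$ and then splitting it. The handling of the leading piece $\mathbb E_\ell(f)\mathbb E_\ell(g^*)$ via H\"older and Doob is likewise identical. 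Up to here the proposal is correct and on the paper's track.

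The gap is in the final step. You bound the predictable bracket by a direct noncommutative Cauchy--Schwarz over the index $j=(I,\theta)$, obtaining at each $x$ a product of conditional square functions: a \emph{conditional row} square function of $f$ and a \emph{conditional column} square function of $g^*$ (equivalently, a conditional row square function of $g$). In the $L_2(\M)$--$L_2(\M)$ form you state, the quantities you obtain, $\bigl(\int_{\mathbb R^n}\bigl(\tau[s_r(f)(x)^2]\bigr)^{p/2}dx\bigr)^{1/p}$ and its counterpart for $g$, are $L_p(\mathbb R^n;L_2(\M))$-type expressions which are simply not equivalent to $\|f\|_{L_p(\A)}$ or $\|g\|_{\mathrm H^c_q(\A,\Sigma_\A)}$ when $p\ne 2$. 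Even if you upgrade to the $L_p(\M)$--$L_q(\M)$ version of H\"older, the $g$-factor becomes $\bigl\|\bigl(\sum_k\mathbb E_{k-1}[\mathbb D_kg\,\mathbb D_kg^*]\bigr)^{1/2}\bigr\|_{L_q(\A)}$, a \emph{conditional row} square function of $g$; for $q<2$ this is controlled by neither $\|g\|_{\mathrm H^c_q}$ (which is an \emph{unconditional column} square function) nor $\|g\|_{L_q}$, since the Burkholder and dual-Doob inequalities go in the wrong direction below $p=2$. So ``standard Stein/Burkholder for conditional square functions'' does not close the argument. The paper's proof of \eqref{key3} sidesteps both obstacles by \emph{not} estimating the bracket directly: it tests against $u\in\M$ with $\|u\|_\M\le1$, uses the trace-preserving module property of $\mathbb E_{k-1}$ to move the conditional expectation off the product and onto $u$, i.e.\ $\tau\bigl(u\,\mathbb E_{k-1}[\mathbb D_kf\,\mathbb D_kg^*]\bigr)=\tau\bigl(\mathbb E_{k-1}(u)\,\mathbb D_kf\,\mathbb D_kg^*\bigr)$, and then cycles and applies the row--column matrix trick with $e_{1k}$, $e_{k1}$. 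Since $\|\mathbb E_{k-1}(u)\|_\M\le1$, the resulting bound involves the \emph{unconditional column} square functions of both $f$ and $g$ in $L_p(\M)$ and $L_q(\M)$, which after integration in $x$ and noncommutative Burkholder--Gundy give precisely $\|f\|_{L_p(\A)}\|g\|_{\mathrm H^c_q(\A,\Sigma_\A)}$. This duality-plus-transfer of $\mathbb E_{k-1}$ is the key move your proposal is missing.
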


\begin{proof}
It suffices to show the case $2\leq p<\infty$, since another case can be shown similarly. Let $q$ be the conjugate index of $p$. Let $f\in L_p(\A)$, and $g\in \mathrm{H}^c_q(\A,\Sigma_\A)$. By approximation, we can assume $b$, $f$ and $g$ are ``nice", so that we do not to justify the infinite sum in the following calculations. By duality, it suffices to show
$$|\varphi(\Lambda_b(f)g^*)|\lesssim \|f\|_{L_p(\A)}\|g\|_{\mathrm{H}^c_q(\A,\Sigma_\A)}.$$
Noting that $\Lambda_bf=bf-\sum_k\mathbb E_{k-1}(b)\mathbb D_k(f)$, we have
\begin{align*}
|\varphi(\Lambda_b(f)g^*)|&=|\varphi((bf-\sum_k\mathbb E_{k-1}(b)\mathbb D_k(f))g^*)|\\
&=|\varphi(bfg^*)-\varphi(b\sum_k\mathbb E_{k-1}(\mathbb{D}_{k}(f)\mathbb D_k(g^*)))|\\
&=|\varphi(b(fg^*-\sum_k\mathbb E_{k-1}(\mathbb{D}_{k}(f)\mathbb D_k(g^*))))|.
\end{align*}
Now decompose $fg^*$, one gets
\begin{align*}
&fg^*-\sum_k\mathbb E_{k-1}(\mathbb{D}_{k}(f)\mathbb D_k(g^*))=\sum_k\mathbb D_{k}(\mathbb{D}_{k}(f)\mathbb D_k(g^*))\\
&+(\sum_{k\in\mathbb Z}\mathbb{E}_{k-1}(f)\mathbb{D}_k(g^*)+\sum_{k\in\mathbb Z}\mathbb{D}_{k}(f)\mathbb E_{k-1}(g^*)).
\end{align*}
The second term can be estimated as in the one-dimensional case. For the first term, note that $\mathbb D_{k}(\mathbb{D}_{k}(f)\mathbb D_k(g^*))$ is a martingale difference, by duality and the fact that the dyadic BMO-norm is controlled by usual BMO-norm, we have
\begin{align*}
&|\varphi(b\sum_k\mathbb D_{k}(\mathbb{D}_{k}(f)\mathbb D_k(g^*)))|\lesssim \|b\|_{\mathrm {BMO}(\mathbb R^n;\mathcal M)}\|\sum_k\mathbb D_{k}(\mathbb{D}_{k}(f)\mathbb D_k(g^*))\|_{\mathrm {H}^m_1(\mathcal A, \Sigma_\mathcal A)}\\
&=\|b\|_{\mathrm {BMO}(\mathbb R^n;\mathcal M)}\int_{\mathbb R^n}\sup_{\ell\in\mathbb Z}\|\sum^\ell_{k=-\infty}\mathbb D_{k}(\mathbb{D}_{k}(f)\mathbb D_k(g^*))\|_{L_1(\mathcal M)}dx.
\end{align*}
Splitting
\begin{align*}
\sum^\ell_{k=-\infty}\mathbb D_{k}(\mathbb{D}_{k}(f)\mathbb D_k(g^*))=\sum^\ell_{k=-\infty}\mathbb{D}_{k}(f)\mathbb D_k(g^*)-\sum^\ell_{k=-\infty}\mathbb E_{k-1}(\mathbb{D}_{k}(f)\mathbb D_k(g^*)),
\end{align*}
noting that the first term can be dealt with as in Proposition \ref{prop:haar1}, and we are reduced to show
\begin{align}\label{key3}
\int_{\mathbb R^n}\sup_{\ell\in\mathbb Z}\|\sum^\ell_{k=-\infty}\mathbb E_{k-1}(\mathbb D_k(f)\mathbb {D}_k(g^*))\|_{L_1(\M)}dx\lesssim\|f\|_{L_p(\A)}\|g\|_{\mathrm{H}^c_q(\A,\Sigma_\A)}.
\end{align}
Using twice the H\"older inequalities and vector-valued Doob's inequality \eqref{doob vector}, it suffices to show for any $\ell$
\begin{eqnarray*}
&&\|\sum^\ell_{k=-\infty}\mathbb E_{k-1}(\mathbb D_k(f)\mathbb {D}_k(g^*))\|_{L_1(\M)}\\
&\leq& \|(\sum_{k\in\mathbb Z}|\mathbb D_k(f)|^2)^{1/2}\|_{L_p(\M)}\|(\sum_{k\in\mathbb Z}|\mathbb D_k(g)|^2)^{1/2}\|_{L_q(\M)}.
\end{eqnarray*}
By duality and the H\"older inequality, using the trace-preserving property of conditional expectation,
\begin{align*}
&\|\sum^\ell_{k=-\infty}\mathbb E_{k-1}(\mathbb D_k(f)\mathbb {D}_k(g^*))\|_{L_1(\M)}\\
&=\sup_{u,\;\|u\|_\M\leq1}|\tau(u\sum^\ell_{k=-\infty}\mathbb E_{k-1}(\mathbb D_k(f)\mathbb {D}_k(g^*)))|\\
&=\sup_{u,\;\|u\|_\M\leq1}|\tau(\sum^\ell_{k=-\infty}\mathbb E_{k-1}(u)(\mathbb D_k(f)\mathbb {D}_k(g^*)))|\\
&=\sup_{u,\;\|u\|_\M\leq1}|\tau(\sum^\ell_{k=-\infty}\mathbb {D}_k(g^*)(\mathbb E_{k-1}(u)\mathbb D_k(f)))|\\
&=\sup_{u,\;\|u\|_\M\leq1}|\tau\otimes tr((\sum^\ell_{k=-\infty}\mathbb {D}_k(g^*)\otimes e_{1k})(\sum^\ell_{k=-\infty}\mathbb E_{k-1}(u)\mathbb D_k(f)\otimes e_{k1}))|\\
&\leq \sup_{u,\;\|u\|_\M\leq1}\|\sum^\ell_{k=-\infty}\mathbb {D}_k(g^*)\otimes e_{1k}\|_{L_q(\M\overline{\otimes}\mathcal B(\ell_2))}\|\sum^\ell_{k=-\infty}\mathbb E_{k-1}(u)\mathbb D_k(f)\otimes e_{k1}\|_{L_p(\M\overline{\otimes}\mathcal B(\ell_2))}\\
&\leq\|(\sum_{k\in\mathbb Z}|\mathbb D_k(f)|^2)^{1/2}\|_{L_p(\M)}\|(\sum_{k\in\mathbb Z}|\mathbb D_k(g)|^2)^{1/2}\|_{L_q(\M)}.
\end{align*}
This finishes the proof by noncommutative Burkholder-Gundy inequality.
\end{proof}



Now let us prove Theorem \ref{thm:CoEs}.
\begin{proof}
Since the Riesz transforms \cite{PTV02}  are shown to be in the convex hull of the dyadic shift operators such as \eqref{shift}, it suffices to estimate $[S, b]$ for one fixed dyadic shift operator $S$. Without loss of generality, we can assume $b=b^*$.  Let $f\in L_2(\mathcal A)$. By approximation, we can assume $b$ and $f$ are ``nice" so that we can decompose $bf=\Lambda_bf+R_bf$, where
$$R_bf=\sum_{k}\mathbb E_{k-1}(b)\mathbb D_{k}(f)=\sum_{\theta\in\{0,1\}^n\setminus\{0\}}\sum_{I\in\mathcal D}\langle b\rangle_I\langle h^\theta_I,f\rangle h^\theta_I,$$
with $\langle b\rangle_I=\frac{1}{|I|}\int_I b$. Thus
$$[S,b]f=[S,\Lambda_b]f+[S,R_b]f.$$
Observe that from the $L_2(\mathcal A)$-boundedness of $S$ and $\Lambda_b$ we have
\begin{align*}
\|[S, \Lambda_b]f\|_{L_2(\mathcal A)}\leq 2\|S\|\|\Lambda_b\|\|f\|_{L_2(\mathcal A)}\lesssim \|b\|_{\mathrm {BMO}(\mathbb R^n,\mathcal M)}\|f\|_{L_2(\mathcal A)}.
\end{align*}
For another term, we claim that
\begin{align}\label{key4}
[S,R_b]f=\sum_{\theta,\eta\in\{0,1\}^n\setminus\{0\}}\sum_{I,J\in\mathcal D}\langle h^\eta_J,Sh^\theta_I\rangle(\langle b\rangle_I-\langle b\rangle_J)\langle h^\theta_I,f\rangle h^\eta_J,
\end{align}
from which, we can conclude the proof. Indeed, by the orthogonality of the Haar basis $h^\theta_I$'s,
$$[S,R_b]f=\sum_{\theta',\theta,\eta\in\{0,1\}^n\setminus\{0\}}\sum_{I\in\mathcal D}a^{\theta',\theta,\eta}_{I}(\langle b\rangle_I-\langle b\rangle_{\hat{I}})\langle h^\theta_I,f\rangle h^\eta_{\hat{I}},$$
where
$$a^{\theta',\theta,\eta}_{I}=\varepsilon^{\theta'}_I\langle h^{\theta_0}_I,h^\theta_I\rangle\langle h^\eta_{\hat{I}},h^{\theta'}_{\hat{I}}\rangle,$$
which equals $\pm 1$ or 0.
Then the fact for any $e\in L_2(\mathcal M)$ with norm 1,
$$\|(\langle b\rangle_I-\langle b\rangle_{\hat{I}})e\|^2_{L_2(\mathcal M)}\lesssim \|b\|^2_{\mathrm {BMO}(\mathbb R^n;\mathcal M)}$$
 yields
$$\|[S, R_b]f\|_{L_2(\mathcal A)}\lesssim\|b\|_{\mathrm {BMO}(\mathbb R^n;\mathcal M)}\|f\|_{L_2(\mathcal A)}.$$

Now let us show the formula \eqref{key4}. Note that $[S,R_b]f=SR_bf-R_bSf$. It is straightforward to compute
\begin{align*}
R_b(Sf)&=\sum_{\eta\in\{0,1\}^n\setminus\{0\}}\sum_{J\in\mathcal D}\langle b\rangle_J\langle h^\eta_J,Sf\rangle h^\eta_J\\
&=\sum_{\theta,\eta\in\{0,1\}^n\setminus\{0\}}\sum_{I,J\in\mathcal D}\langle h^\eta_J,Sh^\theta_I\rangle\langle b\rangle_J \langle h^\theta_I,f\rangle h^\eta_J.
\end{align*}
For another term, we test it on $g=\sum_{\eta\in\{0,1\}^n\setminus\{0\}}\sum_{J\in\mathcal D}\langle h^\eta_J,g\rangle h^\eta_J\in L_2(\mathcal A)$, and obtain
\begin{align*}
\langle\langle g, SR_bf\rangle\rangle&=\sum_{\theta,\eta\in\{0,1\}^n\setminus\{0\}}\sum_{I,J\in\mathcal D}\langle\langle \langle h^\eta_J,g\rangle h^\eta_J, \langle b\rangle_I\langle h^\theta_I,f\rangle Sh^\theta_I\rangle\rangle\\
&=\sum_{\theta,\eta\in\{0,1\}^n\setminus\{0\}}\sum_{I,J\in\mathcal D}\langle\langle g,\langle h^\eta_J, Sh^\theta_I\rangle\langle b\rangle_I\langle h^\theta_I,f\rangle h^\eta_J\rangle\rangle,
\end{align*}
 which yields
$$SR_bf=\sum_{\theta,\eta\in\{0,1\}^n\setminus\{0\}}\sum_{I,J\in\mathcal D}\langle h^\eta_J, Sh^\theta_I\rangle\langle b\rangle_I\langle h^\theta_I,f\rangle h^\eta_J.$$
From the above two identities, we get \eqref{key4}.

\end{proof}

\begin{remark}
\emph{(i).
The above argument works also for general dyadic shifts such as those introduced in \cite{LPR10}. But at the time of writing, the authors have no idea how to show similar results for general Calder\'on-Zygmund singular integral operators. }

\emph{(ii). In the framework of noncommutative harmonic analysis, it would be also interesting to show the result for $p\neq2$. But now the proof is not trivial at all. This is related to the last remark of the previous section.}
\end{remark}

\noindent \textbf{Acknowledgement.} Hong is partially supported by the NSF of China-11601396, 11431011. Liu is partially supported by the NSF of China-11501169. Mei is partially supported by NSF DMS-1700171.

\vskip30pt


\begin{thebibliography}{0}

\bibitem {AHMTT02} P. Auscher, S. Hoffman, C. Muscalu, T. Tao and C. Thiele, Carleson measures trees, extrapolation, and $\mathrm{T}(b)$ theorems, Pub. Mat. {46} (2002), 257-325.


 \bibitem{BlPo08} O. Blasco, S. Pott,
Embeddings between operator-valued dyadic BMO spaces,
Illinois J. Math. 52 (3) (2008), 799-814.

\bibitem{BlPo10} O. Blasco, S. Pott,
Operator-valued dyadic BMO spaces, (English summary)
J. Oper. Theory. 63 (2) (2010), 333-347.

\bibitem{Bou86} J. Bourgain, Vector-valued singular integrals and the $H^1-BMO$ duality, In J. A. Chao, W. Woyczy\'nski (eds.), Probability Theory and Harmonic Analysis, Marcel Dekker, New York, 1986, 1-19.

\bibitem{Bur86} D. L. Burkholder, Martingales and Fourier analysis in Banach spaces. In G. Letta, M. Pratelli (eds.), Probability and Analysis, Lecture Notes in Math. 1206, Springer-Verlag, 1986.

\bibitem{Fig90} T. Figiel, Singular integral operators: a martingale approach. In P. F. X. M\"uller, W. Schachermayer (eds.), Geometry of Banach Spaces. Proceedings of the conference held in Strobl, Austria, 1989. London Math. Soc. Lecture Note Ser. 158, Cambridge Univ. Press, 1990.

\bibitem{Cal77} A. P. Calder\'on, Cauchy integrals on Lipschitz curves and related operators, Proc. Nat. Acad. Sci. USA. 74 (1977), 1324-1327.

\bibitem{CXY13} Z. Chen, Q. Xu and Z. Yin, Harmonic analysis on quantum tori, Comm. Math. Phys. 322 (2013), 755-805.

\bibitem{Chr90} M.Christ, Lectures on singular integral operators,CBMS Regional Conference Series in Mathematics, Vol. 77, Amer. Math. Soc., Providence, RI, 1990.

\bibitem{DaJo84} G. David, J.-L. Journ\'e. A boundedness criterion for generalized Calder\'on-Zygmund operators, Ann. Math. 120 (1984), 371-397.


\bibitem{FMS19} T. Ferguson, T. Mei, B. Simanek, $H^\infty$-calculus for semigroup generators on BMO. Adv. Math. 347 (2019), 408-441.

\bibitem{GPTV00} T. A. Gillespie, S. Pott, S. Treil, A. Volberg, The transfer method in estimates of vector Hankel operators, Algebra i Analiz. 12 (6) (2000), 178-193; (English version to appear in St. Petersburg Math. J.)

\bibitem{GJP17} A. Gonzalez-P\'erez, M. Junge, J. Parcet, Singular integrals in quantum Euclidean spaces, Ann. Sci. \'Ec. Norm. Sup\'er. 50 (4) (2017), 879-925.

\bibitem{Har99} A. Harcharras,  Fourier analysis, Schur multipliers on $S_p$ and non-commutative $\Lambda(p)$-sets, Studia Math. 137 (3) (1999), 203-260.

\bibitem{HJP16} G. Hong, M. Junge, J. Parcet, Algebraic Davis decomposition and Asymmetric Doob indqualities, Commun. Math. Phys. 346 (2016), 995-1019.

\bibitem{HJP17} G. Hong, M. Junge, J. Parcet, Asymmetric Doob indqualities in continuous time, J. Funct. Anal. 273 (4) (2017), 1479-1503.

\bibitem{HLMP14} G. Hong, L.D. L\'opez-S\'anchez, J. Martell, J. Parcet, Calder\'on-Zygmund operators associated to
matrix-valued kernels, Int. Math. Res. Not. 5 (2014), 1221-1252.


 \bibitem{HoMa16} G. Hong, T. Ma, Vector valued q-variation for ergodic averages and analytic semigroups, J. Math. Anal. Appl. 437 (2016), 1084-1100.

\bibitem{HoMa17} G. Hong, T. Ma, Vector valued q-variation for differential operators and semigroups I, Math. Z. 286 (2017), 89-127.


\bibitem{HoMe12} G. Hong, T. Mei, John-Nirenberg inequality and atomic decomposition for noncommutative martingales, J. Funct. Anal. 263 (4) (2012), 1064-1097.

\bibitem{HoYi13} G. Hong and Z. Yin, Wavelet approach to operator-valued Hardy spaces, Rev. Mate. Iberoa. 29 (1) (2013), 293-313.

\bibitem{Hyt06} T. Hyt\"onen, An operator-valued $Tb$ theorem, J. Funct. Anal. 234 (2) (2006), 420-463.

\bibitem{Hyt10} T. Hyt\"onen, Vector-valued singular integrals revisited--with random dyadic cubes, Bull. Pol. Acad. Sci. Math. 60 (3) (2010), 269-283.

\bibitem{Hyt12} T. Hyt\"onen, The sharp weighted bound for general Calder\'on-Zygmund operators, Ann. of Math. 175 (3) (2012), 1473-1506.


\bibitem{HW06} T. Hyt\"onen, L. Weis,
A $T1$ theorem for integral transformations with operator-valued kernel,
J. Reine Angew. Math. 599 (2006), 155-200.

\bibitem{Jun02} M. Junge, Doob's inequality for non-commutative martingales, Journal f\"ur die Reine und Angewandte Mathematik.  549 (2002), 149-190.

\bibitem{JLX06} M. Junge, C. Le Merdy, and Q. Xu, $H^\infty$ functional calculus and square functions on non-commutative $L_p$-spaces, Ast\'erisque. 305 (2006).


\bibitem{JuMe10} M. Junge and T. Mei, Noncommutative Riesz transforms--A probabilistic approach, Amer. J. Math. 132 (2010), 611-681.

\bibitem{JuMe12} M. Junge and T. Mei, BMO spaces associated with semigroups of operators, Math. Ann. 352 (3) (2012), 691-743.

\bibitem{JMP14} M. Junge, T. Mei, and J. Parcet, Smooth Fourier multipliers on group von Neumann algebras, Geom. Funct. Anal. 24 (6) (2014), 1913-1980.

\bibitem{JMP15}  M. Junge, T. Mei, and J. Parcet, Noncommutative Riesz transforms--Dimension free bounds and Fourier multipliers, J. Eur. Math. Soc. 20 (3) (2018), 529-595.

\bibitem{JPPP13} M. Junge, C. Palazuelos, J. Parcet and M. Perrin, Hypercontractivity in group von Neumann algebras, Mem. Amer. Math. Soc. 249 (1183) (2017).

\bibitem{JPPPR12} M. Junge, C. Palazuelos, J. Parcet, M. Perrin and E. Ricard, Hypercontractivity for free products,  Ann. Sci. \'Ec. Norm. Sup\'er. 48 (4) (2015), 861-889.

\bibitem{JuPe14} M. Junge, M. Perrin, Theory of $\mathcal H_p$ space for continuous filtrations in von Neumann algebras, Ast\'erisque. 362 (2014).

\bibitem{JuXu03} M. Junge, Q. Xu, Non-commutative Burkholder/Rosenthal
Inequalities, Ann. Prob. 31 (2) (2003), 948-995.

\bibitem{JuXu08} M. Junge and Q. Xu, Noncommutative Burkholder/Rosenthal
inequalities II: applications, Israel J. Math. 167 (2008), 227-282.

\bibitem{LPR10} M. Lacey, S. Petermichl, M. Reguera, Sharp $A_2$ inequality for Haar shift operators, Math. Ann. 348 (2010), 127-141.

\bibitem{MTX06} T. Mart\'{i}nez, J. L. Torrea, Q. Xu, Vector-valued Littlewood-Paley-Stein theory for semigroups, Adv. Math. 203 (2) (2006), 430-475.

\bibitem{MSX19} E. McDonald, F. Sukochev and X. Xiong, Quantum differentiability on quantum tori. Commun. Math. Phys.
(2019).


\bibitem{MOV11}
J. Mateu, J. Orobitg,  J. Verdera,
Estimates for the maximal singular integral in terms of the singular integral: the case of even kernels,
Ann. of Math. 174 (3) (2011), 1429-1483.

\bibitem{Mei06} T. Mei, Notes on matrix valued paraproducts, Indiana Univ. Math. J. 55 (2) (2006), 747-760.

\bibitem{Mei07} T. Mei, Operator valued Hardy spaces, Mem. Amer. Math. Soc. 188 (2007).


\bibitem{Mei072} T. Mei, An extrapolation of operator-valued dyadic paraproducts, J. Lond. Math. Soc. 81 (3) (2007), 650-662.

\bibitem{Mus03} M. Musat, Interpolation Between Non-commutative BMO and
Non-commutative $L_p$-spaces, J. Funct. Analysis. 202 (1) (2003), 195-225.

\bibitem{NPTV02} F. Nazarov, G. Pisier, S. Treil, A. Volberg, Sharp estimates in vector Carleson imbedding theorem and for vector paraproducts, J. Reine Angew. Math. 542 (2002), 147-171.

\bibitem{NTV97} F. Nazarov, S. Treil, A. Volberg, Counterexample to the infinite dimensional Carleson embedding theorem, C.R. Acad. Sci. Paris Ser. I Math. 325 (1997), 383-389.

\bibitem{NTV03} F. Nazarov, S. Treil, A. Volberg, The $Tb$-theorem on non-homogeneous spaces, Acta. Math, 190 (2) (2003), 151-239.

\bibitem{Par09} J. Parcet, Pseudo-localization of singular integrals and noncommutative Calder\'on-Zygmund theory, J. Funct. Anal. 256 (2009), 509-593.

\bibitem{Pet00} S. Petermichl, Dyadic shifts and a logarithmic bound for Hankel operators with matrix symbols, C. R. Acad. Sci. Paris Ser. I Math. 330 (2000), 455-460.

\bibitem{PTV02} S. Petermichl, S. Treil and A. Volberg, Why the Riesz transforms are averages of the dyadic shifts? Proc. of the 6th Inter. Conf. on Harmonic Analysis and Partial Differential Equations (El Escorial, 2000) (2002) 209-228.

\bibitem{Pis98} G. Pisier, Non-commutative Vector Valued $L_p$-Spaces and Completely $p$-Summing Maps, Ast\'{e}risque 247, 1998.

\bibitem{PiXu97} G. Pisier, Q. Xu, Non-commutative martingale inequalities,  Comm. Math. Phys. 189 (1997), 667-698.

\bibitem{PiXu03} G. Pisier, Q. Xu, Non-commutative $L_p$ -spaces, Handbook of the Geometry of Banach Spaces II. In: Johnson, W.B., Lindenstrauss, J.(eds.) pp. 1459-1517. North-Holland (2003).

\bibitem{PS14} S. Pott, A. Stoica,
Linear bounds for Calder\'on-Zygmund operators with even kernel on UMD spaces.
J. Funct. Anal. 266 (5) (2014), 3303-3319.

\bibitem{Ran02}  N. Randrianantoanina, Non-commutative martingale transform, J. Funct. Analysis, 194 (1) (2002), 181-212.

\bibitem{Ran07} N. Randrianantoanina. Conditioned square functions for noncommutative martingales, Ann. Probab. 35 (3) (2007), 1039-1070.

\bibitem{RiXu16}E. Ricard and Q. Xu, A noncommutative martingale convexity inequality, Ann. Probab, 44 (2016), 867-882.

\bibitem{SZ18} F. Sukochev and D. Zanin, Connes integration formula for the noncommutative plane, Commun. Math. Phys.
359 (2018), 449-466.


\bibitem{TrVo97} S. Treil, A. Volberg, Wavelets and the angle between past and future, J. Funct.
Anal. 143 (1997), 269-308.

\bibitem{XXX16} R. Xia, X. Xiong, Q. Xu, Characterizations of operator-valued Hardy spaces and applications
to harmonic analysis on quantum tori, Adv. Math. 291 (2016), 183-227.

\bibitem{XXY17} X. Xiong, Q. Xu, Z. Yin, Sobolev, Besov and Triebel-Lizorkin spaces on quantum tori,  Mem. Amer. Math. Soc. 252 (2018).

\bibitem{Xu98} Q. Xu, Littlewood-Paley theory for functions with values in uniformly convex spaces, J. Reine. Angew. Math. 504 (1998), 195-226.

\end{thebibliography}
\end{document}